\def\paragraph{\@startsection{paragraph}{4}%
  \z@\z@{-\fontdimen2\font}%
  {\normalfont\bfseries}}
\theoremstyle{plain}
\newtheorem{lemma}{Lemma}[section]
\newtheorem{theorem}[lemma]{Theorem}
\newtheorem{cor}[lemma]{Corollary}
\newtheorem{prop}[lemma]{Proposition}
\theoremstyle{definition}
\newtheorem{defi}[lemma]{Definition}
\newtheorem{example}[lemma]{Example}
\newtheorem{prob}[lemma]{Problem}
\newcommand\ov{\overline}
\newcommand\con{\sim_c}
\newcommand\cp{\sim_p}
\newcommand\co{\sim_{\!o}}
\newcommand\pp{\mathbb{P}}
\newcommand\tr{\mbox{\tiny{$R$}}}
\newcommand\tm{\mbox{\tiny{$M$}}}
\newcommand\tit{\mbox{\tiny{$T$}}}
\newcommand\rrh{\rho_{\!\tr}}
\newcommand\tla{\rho({a})}
\newcommand\tlb{\rho({b})}
\newcommand\tlc{\rho({c})}
\newcommand\ra{\to}
\newcommand\sra{\stackrel{*}{\ra}}
\newcommand\wti{\widetilde}
\date{}
\begin{document}

\title{Decidability and Independence of Conjugacy Problems\\ in Finitely Presented Monoids}

\author{Jo\~ao Ara\'ujo}
\address{Universidade Aberta, R. Escola Polit\'{e}cnica, 147,  
1269-001 Lisboa, Portugal}
\address{CEMAT-Ci\^{e}ncias Universidade de Lisboa,  1749-016 Lisboa, Portugal}
\email{jjaraujo@fc.ul.pt}

\author{Michael Kinyon}
\address{Department of Mathematics, University of Denver, Denver, CO 80208}
\email{mkinyon@du.edu}

\author{Janusz Konieczny}
\address{Department of Mathematics, University of Mary Washington,  
Fredericksburg, VA 22401.}
\email{jkoniecz@umw.edu}

\author{Ant\'onio Malheiro}
\address{%
Centro de Matem\'{a}tica e Aplica\c{c}\~{o}es\\
Faculdade de Ci\^{e}ncias e Tecnologia\\
Universidade Nova de Lisboa\\
2829--516 Caparica\\
Portugal
}
\address{%
Departamento de Matem\'{a}tica\\
Faculdade de Ci\^{e}ncias e Tecnologia\\
Universidade Nova de Lisboa\\
2829--516 Caparica\\
Portugal
}
\email{%
ajm@fct.unl.pt
}

%
%

\maketitle

\begin{abstract}
There have been several attempts to extend the notion of conjugacy from groups to monoids.
The aim of this paper is study the decidability and independence of conjugacy problems
for three of these notions (which we will denote by $\cp$, $\co$, and $\con$) in
certain classes of finitely presented monoids. We will show that in the class of polycyclic monoids,
$p$-conjugacy is ``almost'' transitive, $\con$ is strictly included in $\cp$, and
the $p$- and $c$-conjugacy problems are decidable with linear compexity.
For other classes of monoids, the situation is more complicated.
We show that there exists a monoid $M$ defined by a finite complete
presentation such that the $c$-conjugacy problem for $M$ is undecidable, and
that for finitely presented monoids, the $c$-conjugacy problem and the word
problem are independent, as are the  $c$-conjugacy and $p$-conjugacy problems.

\bigskip

\noindent \textbf{2010 \emph{Mathematics Subject Classification}}:
68Q42, 20F10, 3D35, 3D15.

\bigskip

\noindent\textbf{\emph{Keywords and phrases}}: Conjugacy; finitely presented
monoids; polycyclic monoids; decision problem; decidability; independence; 
complexity.
\end{abstract}

\section{Introduction}\label{scon}
\setcounter{equation}{0}

The well-known notion of conjugacy from group theory can be extended
to monoids in many different ways. The authors dealt with four notions of conjugacy in monoids
in \cite{AKKM2017,AKM14}. The present paper can be considered an extension of this work. Any generalization of
the conjugacy relation to general monoids must avoid inverses. One of the possible
formulations, spread  by Lallement \cite{La79} for a free monoid $M$, was the following relation:
\begin{equation}\label{econ2}
a\cp b\Leftrightarrow \exists_{u,v\in M}\ a=uv \mbox{ and } b=vu.
\end{equation}
(Lallement credited the idea of the relation $\cp$ to Lyndon and Sch\"{u}tzenberger \cite{lyndon}.)
If $M$ is a free monoid, then $\cp$ is an equivalence relation on $M$
\cite[Corollary~5.2]{La79},
and so it can be regarded as a conjugacy in $M$. In a general monoid $M$, the relation $\cp$ is reflexive and
symmetric, but not transitive.
The transitive  closure $\cp^*$ of $\cp$ has
been defined as a conjugacy relation in a general semigroup \cite{Hi06,KuMa07,KuMa09}.
(If $a\cp b$ in a general monoid, we say that $a$ and $b$ are \emph{primarily
conjugate} \cite{KuMa09}, hence our subscript in $\cp$).

Another relation that can serve as a conjugacy in any monoid is defined as
follows:
\begin{equation}\label{econ3}
a\co b\Leftrightarrow \exists_{g,h\in M}\ ag=gb \mbox{ and } bh=ha.
\end{equation}
This relation was defined by Otto for monoids presented by finite Thue systems
\cite{Ot84},
but it is an equivalence relation in any monoid. Its drawback -- as a
candidate for a conjugacy for general monoids --
is that it reduces to the universal relation $M\times M$ for any monoid $M$
that has a zero.

To remedy the latter problem,  three authors of the present paper
introduced a new notion of conjugacy \cite{AKM14}, which
retains Otto's concept for monoids without zero, but does not reduce to
$M\times M$ if $M$ has a zero.
The main idea was to restrict the set from which conjugators can be chosen.
For a monoid $M$ with zero and $a\in M\setminus \{0\}$, let
$\pp(a)$ be the set $\{ g\in M : (\forall m\in M)\ mag=0\ \Rightarrow\ ma=0
\}$, and define
$\pp(0)$ to be $\{0\}$. If $M$ has no zero,
we agree that $\pp(a)=M$, for every $a\in M$.
Following \cite{AKM14}, we define a relation $\con$ on any monoid $M$ by
\begin{equation}\label{e1dcon}
a\con b\Leftrightarrow \exists_{g\in\pp(a)}\exists_{h\in\pp(b)}\
ag=gb\textnormal{ and }bh=ha.
\end{equation}
The relation $\con$ is an equivalence relation on an arbitrary monoid $M$.
Moreover, if
$M$ is a monoid without zero, then $\con\,\,=\,\,\co$; and if $M$ is a free
monoid, then
$\con\,\,=\,\,\co\,\,=\,\,\cp$. In the case when $M$ has a zero, the conjugacy
class of $0$
with respect to $\con$ is $\{0\}$. Throughout the paper we shall refer to $\sim_i$, where $i\in\{p,o,c\}$,
as $i$-conjugacy.

The aim of this paper is to study the decidability and independence of the $i$-conjugacy problems
in some classes of finitely presented monoids.

It is well-known that the conjugacy problem for finitely presented groups is undecidable; that is,
there exists a finitely presented group for which the conjugacy problem is
undecidable \cite{No54}.
The relations $\cp$, $\co$, and $\con$
reduce to group conjugacy when a monoid is a group.
It follows that the $i$-conjugacy problem, for $i\in\{p,o,c\}$, is also undecidable.
However, it is of interest to study decidability of the $i$-conjugacy problems
in particular classes of finitely presented monoids.

First, we consider the class of polycyclic monoids,
which are are finitely presented monoids with zero.
The polycyclic monoids $P_n$, with $n\geq 2$, were first introduced
by Nivat and Perrot \cite{NP70}, and later rediscovered by Cuntz in the
context of the theory of $C^*$-algebras \cite[Section~1]{Cun77}. (Within the
theory of $C^*$-algebras, the polycyclic monoids are often referred to as Cuntz
inverse semigroups.) The polycyclic monoids appear to be related to the idea of
self-similarity \cite{Hines1998}. For example, the polycyclic monoid $P_2$ can
be represented by partial injective maps on the Cantor set: its two generators,
$p_1$ and $p_2$, map, respectively, the left and right hand sides of the Cantor
set, to the whole Cantor set.
These monoids can also be characterized as  the syntactic monoid of the restricted Dyck
language on a set of cardinality $n$, that is, the language that consists of all
correct bracket sequences of $n$ types of brackets.
The study of representations of the polycyclic monoids naturally connects with
the study of its conjugacy relations \cite{Lawson2009,Jones2012}. In
\cite{Lawson2009}, the classification of the `proper closed inverse submonoids'
of $P_n$ depends on the study of its conjugacy classes.

In Section~\ref{spol}, we characterize $p$-conjugacy and $c$-conjugacy in the polycyclic monoids,
and conclude that $\con\ \subset\ \cp$.  (For sets $A$ and $B$, we write $A\subset B$ if $A$ is a proper subset of $B$.)
We then show that the $p$-conjugacy and $c$-conjugacy problems are decidable for polycyclic monoids,
and that, given words $a$ and $b$, testing whether or not $a\sim_i b$, for $i\in\{p,c\}$, can be
done linearly on the lengths of $a$ and $b$. Note that in a polycyclic monoid $P_n$, the relation $\co$
is universal since $P_n$ has a zero.

These positive results obtained for  polycyclic monoids concerning
the decidability and complexity of the conjugacy problems cannot be extended to
the general finitely presented monoids.

In Section~\ref{sdec}, we study decidability results.
In particular, we show that there exists a monoid $M$ defined by a finite complete
presentation such that the $c$-conjugacy problem for $M$ is undecidable (Proposition~\ref{pzex}).

In Section~\ref{sind}, we study independence results.
The word problem for groups is undecidable \cite{Ma47, No58, Po47}.
However, for groups, the word problem is reducible to the conjugacy problem \cite[page~225]{Ot84},
hence if the conjugacy problem for a group $G$ is decidable, then the word problem
for $G$ is also decidable.
Therefore, the word problem and the conjugacy problem for groups are not
independent.
The situation for monoids is different. Osipova \cite{Os73} has proved that for
finitely presented monoids,
the word problem, the $p$-conjugacy problem, and the $o$-conjugacy problem are
pairwise independent.
We show that for finitely presented monoids, the word problem and the
$c$-conjugacy problem are independent (Theorem~\ref{tin1}),
and that the $p$-conjugacy problem and the $c$-conjugacy problem are also
independent (Theorem~\ref{tin2}).
We do not know if the $c$-conjugacy problem and the $o$-conjugacy problem are
independent.

We conclude the paper with Section~\ref{spro} that lists open problems regarding the conjugacies under
discussion.

\section{Background}
In this section we will formulate the main concepts needed in the following
sections. For further background on the free monoid, see \cite{Ho95}; for
presentations, see \cite{Hi92,Rus95}; and, for rewriting systems, see
\cite{BoOt93}.

\paragraph{Alphabets and words.} Let $\Sigma$ be a
non-empty set, called an \emph{alphabet}. We denote by $\Sigma^*$ the set of
finite strings (called \emph{words}) of elements
of $\Sigma$, including the \emph{empty word} $1$. For $w\in\Sigma^*$ and
$a\in\Sigma$,
we denote by $|w|$ the \emph{length} of the word $w$
and by $|w|_a$ the number of occurrences of $a$ in $w$. For example, if
$\Sigma=\{a,b,c\}$ and $w=aabba\in\Sigma^*$,
then $|w|=5$, $|w|_a=3$, and $|w|_c=0$.

A non-empty word $z$ is said to be a \emph{factor} of $w\in \Sigma^*$ if
$w=uzv$, for some words $u,v\in \Sigma^*$. If $w$, $u$, and $v$ are words with
$w=uv$, then $u$ is called a \emph{prefix} of $w$ and $v$ a \emph{suffix} of $w$; the
word $u$ is said to be a \emph{proper} prefix of $w$ if $v$ is non empty
(the notion of proper suffix is dual). Two
words $u$ and $v$ are said to be \emph{prefix-comparable} if either $u$ is a
prefix of
$v$ or $v$ is a prefix of $u$.

\paragraph{Rewriting systems.} Any subset $R$ of
$\Sigma^*\times \Sigma^*$ is called
a \emph{rewriting system} (or a \emph{Thue system}) on $\Sigma$.
An element $(x,y)$ of $R$, also commonly denoted $x=y$, is called a
\emph{rewriting rule}.
If $(x,y)\in R$ and $u,v\in\Sigma^*$,
we say that $uxv$ \emph{reduces} to $uyv$ and we write
$uxv\ra uyv$. A word $w$ is said to be
\emph{irreducible} if there is no $w'\in \Sigma^*$, such that
$w\ra w'$.
We denote by $\sra $ the reflexive and transitive closure of
$\ra $.

 A rewriting system $R$
on $\Sigma$ is \emph{special} if  every element of $R$
is
of the form $(x,1)$ with $x\ne1$;
it is \emph{monadic} if every element of $R$ is of the form $(x,y)$ with
$y\in\Sigma\cup\{1\}$ and $|x|>|y|$;
it is \emph{length reducing} if $|x|>|y|$ for all $(x,y)\in R$;
it is \emph{noetherian} if there is no infinite sequence $w_1,w_2,w_3,\ldots$
of
words in $\Sigma^*$
such that $w_1\ra w_2\ra w_3\ra \cdots$;
it is \emph{confluent}
if for all $u,v,w\in \Sigma^*$, if $u \sra  v$ and  $u
\sra  w$,
then there exists $z\in \Sigma^*$ such that $v\sra  z$ and
$w\sra  z$; and $R$ is
\emph{complete} if it is both noetherian and confluent. Note that if $R$ is
special or monadic, then it is length reducing,
and if $R$ is length reducing, then it is noetherian.

\paragraph{Monoid presentations.} Every rewriting system $R$ on $\Sigma$
defines a monoid. The set $\Sigma^*$
with
concatenation of words as multiplication
is a monoid, called the \emph{free monoid} on $\Sigma$. Denote by
$\rrh$ the smallest congruence on $\Sigma^*$ containing $R$
(called the \emph{Thue congruence}).
We denote by $M(\Sigma;R)$ the quotient monoid
$\Sigma^*\!/\rrh$.
The elements of $M(\Sigma;R)$ are the congruence classes
$[u]_{\tm}=\{w:w\,\,\rrh\,\,u\}$, where $u\in\Sigma^*$. Whenever
possible and when it is clear from the context, we shall
 omit the brackets to denote congruence classes, and thus identify words with
the elements of the monoid that they represent.

Suppose $M$ is any monoid such that $M\cong M(\Sigma;R)$ (that is, $M$ is
isomorphic to $M(\Sigma;R)$). Then the
pair $(\Sigma; R)$ is a \emph{presentation}
of $M$ with \emph{generators} $\Sigma$ and \emph{defining relations} $R$, and
we say that $M$
is defined by
$(\Sigma; R)$ or simply by $R$. A presentation $(\Sigma; R)$
is said to be \emph{finite} if both $\Sigma$ and $R$ are finite. A monoid $M$
defined by
a finite presentation is called \emph{finitely presented}.

\begin{defi}\label{ddec}
Let $M=M(\Sigma;R)$ be a finitely presented monoid, and let
$\sim_i$ be one of the conjugacy relations under consideration
($i\in\{p,o,c\}$).
We say that the $i$-conjugacy problem for $M$ is \emph{decidable}
if there is an algorithm that given any pair $(u,v)$ of words in $\Sigma^*$,
returns YES if $[u]_{\tm}\sim_i[v]_{\tm}$ and NO otherwise.
If such an algorithm does not exists, we say that the $i$-conjugacy problem for
$M$ is \emph{undecidable}. We have an analogous definition of the decidability
of the word problem for $M$, in
which case we check if $[u]_{\tm}=[v]_{\tm}$.
\end{defi}

\paragraph{Monoids with zero: rewriting systems and presentations.}
Consider a rewriting system $R$ defined on a set $\Sigma_0=\Sigma\cup\{0\}$, where $0$
is a symbol not in $\Sigma$, and a set $R_0$ of rewriting
rules of the form $(x0,0)$, $(0x,0)$ and $(00,0)$, for any $x\in\Sigma$. The
monoid $T=M(\Sigma_0; R\cup R_0)$ is a monoid with zero $[0]_{\tit}$. For
simplicity, we refer to the pair $(\Sigma_0;R)$ as a \emph{monoid-with-zero
presentation} of $T$. Notice that the monoid presentation $(\Sigma_0; R\cup R_0)$ is finite or monadic when $(\Sigma_0;R)$ is finite
or monadic, respectively.

If a monoid $M$ is defined by a presentation $(\Sigma; R)$ then the
monoid $M^0$,  obtained from $M$ by adding a zero element, is defined by the
monoid-with-zero presentation  $(\Sigma; R)$. Observe that
$[0]_{M^0}$ is the zero in $M^0$ and that
$M^0\ne\{[0]_{\tm^0}\}$.
Regarding these presentations, we can deduce
by \cite[Proposition~3.1]{AM2011} that if the rewriting system $ R$ on $\Sigma$
is complete, then so is the new rewriting system $R\cup R_0$ on $\Sigma_0$.

Throughout the text we refer to a presentation as noetherian, confluent,
complete, monadic, etc., whenever the associated rewriting system has the
respective property.

\section{Conjugacy in the polycyclic monoids}\label{spol}
In this section, we study $p$-conjugacy and $c$-conjugacy in the
class of polycyclic monoids, an important class of inverse monoids.  A
monoid $M$ is called
an \emph{inverse monoid} if for every $a\in M$, there exists a unique
$a^{-1}\in M$ (an \emph{inverse} of $a$) such that
$aa^{-1}a=a$ and $a^{-1}aa^{-1}=a^{-1}$ \cite[p.~145]{Ho95}.

In general, $p$-conjugacy is not transitive in inverse semigroups. For
instance, by \cite[Proposition~4.2]{Ch93},
$p$-conjugacy is not transitive in free inverse monoids.
We will show that in the polycyclic monoids, $p$-conjugacy is transitive for
the elements not $\cp$-related to zero,
and that $\con\,\,\subset\,\,\cp$.

We note that in the polycyclic monoids, $\co$ is the universal relation
since every polycyclic monoid has a zero.

\subsection{General properties of the polycyclic monoids}
Let $n\geq 2$. Consider a set $A_n=\{p_1,\ldots, p_n\}$ and denote by
$A_n^{-1}$ a disjoint copy $\{p_1^{-1},\ldots, p_n^{-1}\}$.
Let $\Sigma=A_n\cup A_n^{-1}$. Given any
$x\in \Sigma$, we define $x^{-1}$ to be $p_i^{-1}$ if $x=p_i\in A_n$,
and to be $p_i$ if $x=p_i^{-1}\in A_n^{-1}$. This notation can be extended
to $\Sigma^*$ by setting $(xw)^{-1}=w^{-1}x^{-1}$, for every $x\in \Sigma$ and
$w\in \Sigma^*$, and $1^{-1}=1$.

Denote by $R$ the set of rewriting rules on $\Sigma_0=\Sigma\cup\{0\}$ of the form
$p_i^{-1}p_i=1$, for $i\in\{1,\ldots,n\}$, and of the form $p_i^{-1}p_j=0$, for
$i,j\in\{1,\ldots,n\}$ and $i\neq j$.
Consider the monoid $P_n$ defined by the
monoid-with-zero presentation $(\Sigma_0; R)$.
The monoid $P_n$ is called the \emph{polycyclic monoid} on $n$ generators.
Notice that the given presentation of $P_n$ is monadic, and thus length
reducing.

An irreducible element (with respect to $R$) cannot have a factor of the form
$p_i^{-1}p_j$, for any $i,j\in\{1,\ldots,n\}$. Thus, irreducible
elements have the form $yx^{-1}$, where $y,x\in A_n^*$, or the form $0$.
It is well known (e.g., \cite[subsection~9.3]{Lawson2009}) that every nonzero
element $w$ of $P_n$ has a
\emph{unique} irreducible representation $\ov{w}$ of the form $yx^{-1}$ with $y,x\in A_n^*$.
Therefore, irreducible elements are in one-to-one correspondence with elements
of the polycyclic monoid. We deduce the following:

\begin{lemma}
The monoid-with-zero presentation $(\Sigma_0;R)$ of the polycyclic monoid
$P_n$ is finite and complete.
\end{lemma}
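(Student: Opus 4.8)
The plan is to verify the two asserted properties — finiteness and completeness — of the presentation $(\Sigma_0; R)$. Finiteness is immediate: $\Sigma_0 = A_n \cup A_n^{-1} \cup \{0\}$ has $2n+1$ elements, and $R$ consists of the $n$ rules $p_i^{-1}p_i = 1$ together with the $n(n-1)$ rules $p_i^{-1}p_j = 0$ for $i \neq j$, so $R$ is finite. (Recall that by the monoid-with-zero convention, the full rewriting system is $R \cup R_0$, where $R_0$ contains the rules $x0 = 0$, $0x = 0$, $00 = 0$ for $x \in \Sigma$; this is also finite.)

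For completeness, we must establish that $R \cup R_0$ is both noetherian and confluent. Noetherianness is easy and in fact already noted in the excerpt: the presentation is monadic, hence length reducing, and length-reducing systems are noetherian. So the real content is confluence. Here I would invoke the fact, recorded in the excerpt via \cite[Proposition~3.1]{AM2011}, that if a rewriting system on $\Sigma$ is complete then adjoining the zero rules $R_0$ to get a system on $\Sigma_0$ preserves completeness. Thus it suffices to show that the system $R$ restricted to $\Sigma$ — with the rules $p_i^{-1}p_i \to 1$ and $p_i^{-1}p_j \to 0$ (where $0$ is now just another letter) — is complete, equivalently, since it is already noetherian, that it is confluent.

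To check confluence of a noetherian system it is enough, by Newman's lemma together with the critical pair lemma, to examine the overlaps (critical pairs) between left-hand sides and show they are joinable. The left-hand sides are all of the form $p_i^{-1}p_j$, a two-letter word beginning with an "inverse" letter and ending with a "non-inverse" letter. An overlap of two such words would require the suffix of one to coincide with a prefix of the other; but a proper suffix of $p_i^{-1}p_j$ is the single letter $p_j \in A_n$, while every left-hand side begins with a letter from $A_n^{-1}$, so no nontrivial overlap is possible. Hence there are no critical pairs, and the system is confluent. (Alternatively, one can argue directly from the stated uniqueness of the irreducible normal form $\overline{w} = yx^{-1}$ for each nonzero element together with $0$ for the zero element: a noetherian system in which every congruence class contains a unique irreducible element is confluent.) Combining noetherianness with this confluence gives completeness of $R$ on $\Sigma$, and then \cite[Proposition~3.1]{AM2011} upgrades it to completeness of $R \cup R_0$ on $\Sigma_0$, which is the claim.

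I do not anticipate a genuine obstacle here; the lemma is essentially a bookkeeping consequence of the monadic form of the rules and the already-cited normal-form result. The only point requiring a little care is making precise the passage between "$(\Sigma_0; R)$ as a monoid-with-zero presentation" and "$R \cup R_0$ as an ordinary rewriting system," and ensuring the cited proposition on adjoining zeros applies — but the excerpt has already set this up, so the argument is short.
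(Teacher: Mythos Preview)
Your alternative argument --- noetherian plus unique irreducible representatives implies complete --- is correct, and it is exactly the paper's route: the lemma is stated immediately after recording that every nonzero element has a unique irreducible form $yx^{-1}$ (cited from Lawson), with the words ``We deduce the following.'' So you have the paper's proof as your parenthetical.

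Your primary line, however, has a small but genuine gap. Proposition~3.1 of \cite{AM2011}, as the paper quotes it, concerns a rewriting system $R$ on $\Sigma$ and asserts that adjoining the zero rules $R_0$ yields a complete system on $\Sigma_0$. But for $P_n$, the rules $p_i^{-1}p_j\to 0$ (for $i\ne j$) already have $0$ on the right-hand side, so $R$ is \emph{not} a rewriting system on $\Sigma$; treating $0$ as ``just another letter'' places you on the alphabet $\Sigma_0$, and the proposition no longer applies as stated (adjoining a zero to $\Sigma_0$ would introduce a second zero symbol, not recover $R\cup R_0$). You anticipated that this passage needed care, but you did not resolve it.

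The critical-pair idea is salvageable: apply it directly to $R\cup R_0$ on $\Sigma_0$, bypassing the proposition entirely. All left-hand sides have length two, so there are no inclusions, and the handful of genuine overlaps (e.g.\ $p_i^{-1}p_j\cdot 0$, $0\cdot p_i^{-1}p_j$, $x\cdot 0\cdot y$, $0\cdot 0\cdot 0$) all resolve to $0$. This is a perfectly valid alternative to the normal-form argument, just slightly longer.
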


Whenever we write $a=yx^{-1}$, it will be understood that $x,y\in A_n^*$.
Hereafter, we shall identify irreducible elements with the elements of the
polycyclic monoid that they represent.

We will frequently use the following lemma,
which follows from the unique representation of the nonzero elements of $P_n$.
\begin{lemma}
\label{lbas}
Consider  nonzero elements $yx^{-1}$ and $vu^{-1}$ of $P_n$. Then:
\begin{itemize}
  \item[\rm(1)] $yx^{-1}\cdot vu^{-1}\ne0$ iff $x$ and $v$ are
prefix-comparable;
  \item[\rm(2)] if $yx^{-1}\cdot vu^{-1}\ne0$, then
\[
y x^{-1} \cdot v u^{-1} =
\begin{cases}
 \ yzu^{-1} & \text{if } v=xz\,, \\
 \ y(uz)^{-1} & \text{if } x=vz\,.
\end{cases}
\]
  \item[\rm(3)] $y=v$ in $P_n$ iff $y=v$ in $A_n^*$, and $x^{-1}=u^{-1}$ in
$P_n$ iff $x=u$ in $A_n^*$.
\end{itemize}
\end{lemma}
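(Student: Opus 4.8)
The plan is to derive all three parts from a single analysis of how the concatenated word $y x^{-1} v u^{-1}$ reduces, together with the uniqueness of irreducible representations. Since $y,u\in A_n^*$, the blocks $y$ and $u^{-1}$ contain no factor of the form $p_i^{-1}p_j$, so the only place in $y x^{-1} v u^{-1}$ where a rule of $R$ can ever apply is at the central junction $x^{-1}v$. Thus everything hinges on computing the irreducible form of $x^{-1}v$, and because the presentation is complete (by the preceding lemma), this irreducible form does not depend on the order in which we perform cancellations; reducing from the junction outward genuinely computes the product in $P_n$.

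First I would analyze $x^{-1}v$ by induction on $\min(|x|,|v|)$, i.e.\ on the number of cancellation steps at the junction. If $x$ or $v$ is empty the claim is immediate: $x^{-1}v$ equals $v$ or $x^{-1}$ respectively, which is already irreducible and nonzero, and the corresponding prefix relation ($x=1$ a prefix of $v$, or $v=1$ a prefix of $x$) holds trivially. Otherwise write $x=p_i x'$ and $v=p_j v'$, so the junction factor is $p_i^{-1}p_j$. If $i\ne j$, the rule $p_i^{-1}p_j=0$ applies and the whole product collapses to $0$; and since $x$ and $v$ disagree in their first letter, they are not prefix-comparable. If $i=j$, the rule $p_i^{-1}p_i=1$ cancels the junction, leaving $x'^{-1}v'$, and $x,v$ are prefix-comparable exactly when $x',v'$ are. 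Applying the inductive hypothesis to $(x',v')$ simultaneously yields part~(1) and, in the nonzero case, identifies the reduced form of $x^{-1}v$ as the leftover positive suffix $z$ when $v=xz$, or the leftover negative suffix $z^{-1}$ when $x=vz$. Substituting back into $y\,(x^{-1}v)\,u^{-1}$ gives $yzu^{-1}$ in the first case and $yz^{-1}u^{-1}=y(uz)^{-1}$ in the second, establishing~(2).

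For part~(3), the observation is simply that any word in $A_n^*$, and any word in $(A_n^{-1})^*$, is already irreducible, since no left-hand side of a rule in $R$ occurs as a factor of a purely positive or purely negative word. Hence if $y=v$ in $P_n$, then $y$ and $v$ have the same irreducible representation, and by uniqueness of that representation they must be equal in $A_n^*$; the dual argument handles $x^{-1}=u^{-1}$, and the converse implications are immediate. I expect the only delicate point to be the bookkeeping in the inductive step—ensuring that the peeling-off of matching letters at the junction is phrased so that it visibly computes the product in $P_n$ rather than in some partial reduction—but this is precisely what completeness guarantees, so the difficulty is one of careful phrasing rather than of substance.
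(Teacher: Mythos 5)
Your proof is correct, and it follows essentially the route the paper intends: the paper states this lemma without proof, remarking only that it ``follows from the unique representation of the nonzero elements of $P_n$'', and your argument---reducing at the junction $x^{-1}v$, invoking completeness of the presentation to identify the resulting irreducible word with the product in $P_n$, and using uniqueness of irreducible forms for part (3)---is exactly the worked-out version of that remark.
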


An irreducible word is said to be \emph{cyclically reduced} if it is empty or
zero, or if its first letter $c$ and its last letter $d$ satisfy $c\neq
d^{-1}$. Every nonzero irreducible word can be written in the form
$ryx^{-1}r^{-1}$,  with $r\in A_n^*$ and $yx^{-1} $ a cyclically reduced word.
From any irreducible word $a$ we compute a cyclically reduced word $\widetilde{a}$ in
the following way: if $a$ is cyclically reduced, we let $\widetilde{a}$ be equal to $a$; otherwise,
$a=ryx^{-1}r^{-1}$ as above, so we let $\widetilde{a}$ be the (possibly empty) cyclically reduced word $yx^{-1}$.
We obtain the following fact for any nonzero irreducible word $a\in P_n$:
\begin{equation}\label{charc_reduced}
 a=r\widetilde{a}r^{-1}\ \textrm{for some word } r\in A_n^*\,.
\end{equation}

For each nonzero element $a=yx^{-1}\in P_n$, denote by $\tla$ the irreducible word obtained from $x^{-1}y$.
We also set $\rho(0)=0$. Let $a=yx^{-1}\in P_n$. We record the following facts about $\widetilde{a}$ and $\rho(a)$:
\begin{itemize}
  \item[(a)] $\rho(a)$ is $x^{-1}y$ reduced in $P_n$;
  \item[(b)] $\widetilde{a}$ is $x^{-1}y$ reduced in $(\Sigma,R_1)$, where $R_1=\{(p_i^{-1}p_i,1):i\in\{1,\ldots,n\}$;
  \item[(c)] $\tla$ is cyclically reduced;
  \item[(d)] $\tla=\widetilde{a}\in A_n^*$ if $x$ is a prefix of $y$; $\tla=\widetilde{a}\in (A_n^{-1})^*$ if $y$ is a prefix of $x$;
and $\tla=\rho(\widetilde{a})=0$ otherwise.
\end{itemize}
For example, if $a=p_1p_2p_3^{-1}p_1^{-1}$, then $\widetilde{a}=p_2p_3^{-1}$ and $\rho(a)=0$.

The following lemma can be easily deduced.

\begin{lemma}
\label{l51}
For all $a=pq^{-1}\cdot rs^{-1}\in P_n$, the cyclically reduced word $\widetilde{a}$ is given by
\[
\begin{cases}
\ lt & \text{if } r=qt\text{ and }p=sl\,, \\
\ \widetilde{tl^{-1}} & \text{if }r=qt\text{ and }s=pl\,, \\
\ \widetilde{lt^{-1}} & \text{if }q=rt\text{ and }p=sl\,, \\
\ (lt)^{-1} & \text{if }q=rt\text{ and }s=pl\,.
\end{cases}
\]
\end{lemma}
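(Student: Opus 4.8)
The plan is to prove Lemma~\ref{l51} by a direct case analysis driven by Lemma~\ref{lbas}, pushing the product $pq^{-1}\cdot rs^{-1}$ into its unique irreducible form $yx^{-1}$ and then computing $\widetilde{a}$ from the definition. First I would invoke Lemma~\ref{lbas}(1) to observe that the hypotheses amount to the product being nonzero, which forces $q$ and $r$ to be prefix-comparable, so exactly one of the cases ``$r=qt$'' or ``$q=rt$'' holds; within each of these, $p$ and the resulting denominator must again be prefix-comparable when we later reduce $x^{-1}y$, which is what produces the secondary split into ``$p=sl$'' versus ``$s=pl$.'' This is precisely the two-by-two branching displayed in the statement.

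Next, in each of the four branches I would apply Lemma~\ref{lbas}(2) to get the explicit irreducible form of $a=pq^{-1}\cdot rs^{-1}$. For instance, if $r=qt$ then $a=p(st^{-1}\cdot)$... more carefully, $a = p t s^{-1}$ is wrong in general because we must keep reducing; rather $pq^{-1}\cdot rs^{-1} = p\,t\,s^{-1}$ when $r=qt$, and then $\widetilde{a}$ is obtained by cancelling a common prefix of $pt$ and $s$. If additionally $p=sl$, then $pt = slt$ has $s$ as a prefix, so $a = s\,(lt)\,s^{-1}$ is already in the form $r\widetilde{a}r^{-1}$ of \eqref{charc_reduced} with $r=s$, giving $\widetilde{a}=lt\in A_n^*$; if instead $s=pl$, then $a = pt(pl)^{-1} = p\,(t l^{-1})\,p^{-1}$ after factoring out the prefix $p$, so $\widetilde{a}=\widetilde{tl^{-1}}$ — here $tl^{-1}$ need not itself be cyclically reduced, which is why the tilde reappears on the right-hand side. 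The branch $q=rt$ is handled symmetrically: $a = p\,(t^{-1})\,... $, i.e. $pq^{-1}\cdot rs^{-1} = p\,(ts)^{-1}$ reduces with $p$ versus $s$ prefix-comparable, yielding $\widetilde{lt^{-1}}$ when $p=sl$ and $(lt)^{-1}$ when $s=pl$. Throughout, Lemma~\ref{lbas}(3) is used to justify that equalities of the $A_n^*$-parts can be read off letter by letter.

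The main obstacle, such as it is, will be bookkeeping rather than any conceptual difficulty: one must be careful that the word-combinatorial splittings are genuinely exhaustive and mutually compatible (e.g., that assuming both $r=qt$ and $p=sl$ does not secretly impose a further relation that changes the form), and that the ``extra'' tilde in the middle two cases is correctly placed — it is needed exactly because $tl^{-1}$ or $lt^{-1}$ can still contain a cancelling pair $p_i p_i^{-1}$ at the junction, whereas in the first and fourth cases the word $lt$ (resp. $(lt)^{-1}$) lies entirely in $A_n^*$ (resp. $(A_n^{-1})^*$) and is automatically cyclically reduced. Once the four computations are laid out side by side against the definition of $\widetilde{a}$ and equation~\eqref{charc_reduced}, the lemma follows; this is why the authors describe it as ``easily deduced,'' and I would present it as a short enumerated check of the four cases rather than a lengthy argument.
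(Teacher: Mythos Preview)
Your case-by-case computation is correct and is exactly the routine verification the paper has in mind when it says the lemma ``can be easily deduced'': reduce $pq^{-1}\cdot rs^{-1}$ via Lemma~\ref{lbas}(2), then factor out the common prefix to reach the form of~\eqref{charc_reduced}. One small correction to your framing: the secondary split into $p=sl$ versus $s=pl$ is \emph{not} forced by the hypothesis $a\ne0$ alone (there is no reason $p$ and $s$ must be prefix-comparable just because $pq^{-1}\cdot rs^{-1}\ne0$); rather, these are simply the hypotheses of the four cases, and the lemma makes no claim outside them. The exhaustiveness you are looking for only appears later in the proof of Theorem~\ref{ccp}, where \emph{both} $a=pq^{-1}\cdot rs^{-1}$ and $b=rs^{-1}\cdot pq^{-1}$ are nonzero, so that $s$ and $p$ are indeed prefix-comparable by Lemma~\ref{lbas}(1). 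This does not affect the correctness of your argument for the lemma itself.
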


\subsection{$p$-conjugacy in $P_n$}
We first observe that for every $a\in P_n$,  $a\cp\widetilde{a}$ and $a\cp \tla$
(by the definitions of $\widetilde{a}$ and $\rho(a)$.

\begin{lemma}
\label{l51aa}
Let $a\in P_n$. Then $a\cp 0$ if and only if  $\tla=0$.
\end{lemma}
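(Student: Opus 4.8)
\textbf{Proof plan for Lemma~\ref{l51aa}.} The plan is to establish both implications using the characterization of nonzero products in $P_n$ from Lemma~\ref{lbas}(1), together with the already-observed fact that $a\cp\rho(a)$ for every $a\in P_n$ and that $\cp$ is symmetric.

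For the forward direction, suppose $a\cp 0$. If $a=0$ there is nothing to prove since $\rho(0)=0$ by definition, so assume $a=yx^{-1}\ne 0$. By definition of $\cp$, there exist $u,v\in P_n$ with $a=uv$ and $0=vu$. From $a=uv\ne 0$ we see that $u,v$ are both nonzero, say $u=y_1x_1^{-1}$ and $v=y_2x_2^{-1}$; moreover, by Lemma~\ref{lbas}(1), $x_1$ and $y_2$ are prefix-comparable. From $vu=0$ and Lemma~\ref{lbas}(1), $x_2$ and $y_1$ are \emph{not} prefix-comparable. The plan is to compute $a=uv$ explicitly via Lemma~\ref{lbas}(2) in the two prefix-comparability cases, read off $y$ and $x$ in terms of $y_1,x_1,y_2,x_2$, and then check using fact~(d) preceding Lemma~\ref{l51} (or directly from the definition of $\rho$) that the reduction of $x^{-1}y$ collapses to $0$ precisely because the non-prefix-comparability of $x_2$ and $y_1$ is inherited by the relevant prefixes of $x$ and $y$. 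Concretely, if $y_2=x_1z$ then $a=y_1zx_2^{-1}$, so $x=x_2$ and $y=y_1z$; then $x^{-1}y = x_2^{-1}y_1z$, and since $x_2,y_1$ are not prefix-comparable this word reduces to $0$, i.e.\ $\rho(a)=0$. The case $x_1=y_2z$ is dual and handled the same way.

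For the converse, suppose $\rho(a)=0$; again assume $a=yx^{-1}\ne 0$, so by fact~(d) neither $x$ is a prefix of $y$ nor $y$ is a prefix of $x$, i.e.\ $x$ and $y$ are not prefix-comparable. Since $a\cp\rho(a)=0$ and $\cp$ is symmetric, we immediately get $a\cp 0$. (Alternatively, to make the witnessing decomposition explicit: take $u=y x^{-1}$ and $v=1$? — no, that gives $vu=a$; instead take $u = y$ and $v = x^{-1}$, so that $uv = yx^{-1} = a$ and $vu = x^{-1}y$, which reduces to $0$ in $P_n$ by Lemma~\ref{lbas}(1) since $x,y$ are not prefix-comparable.) Either route closes the implication.

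The only mild obstacle is bookkeeping in the forward direction: one must track which of $y_1z$, $x_2$, etc.\ play the role of the numerator and denominator after forming $uv$, and verify that non-prefix-comparability of $x_2$ and $y_1$ really does force non-prefix-comparability of $x$ and $y$ (in the case $y_2=x_1z$ this is clear since $x=x_2$ and $y_1$ is a prefix of $y=y_1z$; a word prefix-comparable with $y$ and shorter-or-equal than $y_1$ would be prefix-comparable with $y_1$, giving the contradiction). The symmetric case and the degenerate cases ($x_1$ or $x_2$ or the $z$'s empty) are routine and reduce to the same observation, so I would dispatch them briefly.
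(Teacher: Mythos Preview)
Your proposal is correct and follows essentially the same route as the paper: both dispatch the converse via the observation $a\cp\rho(a)$, and for the forward direction both write $a=uv$ with $vu=0$, split into the two prefix-comparability cases governing the product $uv$ (Lemma~\ref{lbas}), and use the non-prefix-comparability forced by $vu=0$ to conclude $\rho(a)=0$. The only cosmetic difference is that you reduce $x^{-1}y$ directly (noting that a factor $x_2^{-1}y_1$ collapses to $0$), whereas the paper phrases the same conclusion as ``neither the numerator nor the denominator of $a$ is a prefix of the other.''
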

\begin{proof}
Suppose that $a\cp 0$. If $a=0$ then $\rho(a)=0$. Suppose $a\ne0$. Then
 $0\ne a=pq^{-1}\cdot rs^{-1}$ and $0=rs^{-1}\cdot pq^{-1}$, for some $p,q,r,s\in
A_n^*$. The latter equality implies that
 $p$ and $s$ are not prefix-comparable (by Lemma~\ref{lbas}). And the former
implies that $r=qt$ or $q=rt$, for some $t\in A_n^*$.
Hence, $a=pts^{-1}$ (if $r=qt$) or $a=p(st)^{-1}$ (if $q=rt$).
Suppose that $a=pts^{-1}$. If $pt$ is a prefix of $s$, then also $p$ is a prefix
of $s$; and
if $s$ is a prefix of $pt$, then  either $s$ is a prefix of $p$ or $p$ is a
prefix of $s$. It follows that
neither $pt$ is a prefix of $s$ nor $s$ is a prefix of $pt$, which implies
$\tla=0$. By a similar argument,
we obtain $\tla=0$ if $a=p(st)^{-1}$.
The converse follows from the fact that $a\cp\tla$.
\end{proof}

\begin{lemma}
\label{l453}
Let $a$ and $b$ be nonzero elements of $P_n$ with $\tla=\tlb=0$.
Then $a\cp b$ if and only if $\widetilde{a}=\widetilde{b}$.
\end{lemma}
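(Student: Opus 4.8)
The plan is to prove Lemma~\ref{l453} by analysing what $a\cp b$ means concretely for elements $a=yx^{-1}$ and $b=vu^{-1}$ of $P_n$, using the structure theory already developed, and then showing that the hypothesis $\tla=\tlb=0$ forces the conjugating factorisation to have a very rigid shape.

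First I would unpack the definition: $a\cp b$ means there exist $c,d\in P_n$ with $a=cd$ and $b=dc$. Since $a\ne0$, both $c$ and $d$ are nonzero, so I can write $c=pq^{-1}$ and $d=rs^{-1}$ with $p,q,r,s\in A_n^*$. Then $a=pq^{-1}\cdot rs^{-1}$ and $b=rs^{-1}\cdot pq^{-1}$, and Lemma~\ref{l51} (applied to $a$ and, symmetrically, to $b$) tells me exactly what $\widetilde{a}$ and $\widetilde{b}$ are in each of the four prefix cases. The key observation is that fact~(d) preceding Lemma~\ref{l51} says $\tla=0$ precisely when neither component of $\widetilde{a}$ is wholly ``positive'' or wholly ``negative'' — i.e. $\widetilde{a}=y'x'^{-1}$ is genuinely mixed (both $y'$ and $x'$ nonempty and, being cyclically reduced, with distinct first letters). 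So the hypothesis $\tla=\tlb=0$ tells me $\widetilde{a}$ and $\widetilde{b}$ are both genuinely two-sided cyclically reduced words.

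Next I would run through the four cases of Lemma~\ref{l51} for $a$ under this hypothesis. In the first case $\widetilde{a}=lt$ lies in $A_n^*$, so $\tla\ne0$ by fact~(d) unless this forces a contradiction — actually $lt\in A_n^*$ means $\widetilde a$ is purely positive, which by~(d) gives $\tla=\widetilde a$, contradicting $\tla=0$ unless $\widetilde a$ is empty; but if $\widetilde a=1$ then $\tla=0$ is consistent, and I must handle the ``$\widetilde{a}$ empty'' situation separately (there $a\cp\tla=0$, contradicting $a\ne0$... no: $\widetilde a = 1$ does not make $a=0$; rather $\widetilde a=1$ gives $\tla = 1 \ne 0$). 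So in fact the first and fourth cases of Lemma~\ref{l51} are incompatible with $\tla=0$, and likewise the mixed subcases of cases two and three where the reduction collapses to a one-sided word. What survives is: $\widetilde{a}=\widetilde{tl^{-1}}$ with $r=qt$, $s=pl$ and $tl^{-1}$ already genuinely mixed (or the symmetric case $\widetilde{a}=\widetilde{lt^{-1}}$). Doing the same for $b$ with the factorisation $d c$ reversed, and matching the surviving cases, I expect to read off directly that $\widetilde{a}$ and $\widetilde{b}$ are cyclic rotations of the same cyclically reduced word, and then — because a genuinely two-sided cyclically reduced word in $A_n^*\cup(A_n^{-1})^*$-letters that is a cyclic conjugate of itself via a $pq^{-1}$-type element must in fact equal itself — conclude $\widetilde{a}=\widetilde{b}$. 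The converse direction is immediate from the already-noted fact that $a\cp\widetilde{a}$ and $b\cp\widetilde{b}$ together with transitivity of $\cp$ on this set, or more safely by exhibiting the explicit factorisation: if $\widetilde a=\widetilde b$, write $a=r\widetilde a r^{-1}$ and $b=s\widetilde b s^{-1}=s\widetilde a s^{-1}$ from~\eqref{charc_reduced}, and check $a\cp\widetilde a\cp b$ using $c=r,\ d=\widetilde a r^{-1}$ and similarly for $b$.

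The main obstacle I anticipate is the bookkeeping in the case analysis: Lemma~\ref{l51} has four cases for $a$ and (after reversing the product) four for $b$, and I must verify in each that the hypothesis $\tla=\tlb=0$ either kills the case or pins down $\widetilde a$ and $\widetilde b$ to be equal. The genuinely delicate point is ruling out a ``nontrivial rotation'': a priori $a\cp b$ could make $\widetilde b$ a cyclic shift of $\widetilde a$ rather than $\widetilde a$ itself, and I need the two-sided (mixed) structure — both a positive and a negative part present — to force the shift to be trivial, since any prefix $t$ of the positive part $y'$ that one tries to ``rotate off'' would have to also be a prefix of the negative side to keep the word in $P_n$ nonzero, which a cyclically reduced mixed word forbids. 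I would isolate this as the crux and argue it carefully using Lemma~\ref{lbas}, deferring the remaining cases to routine checking.
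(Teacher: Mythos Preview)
Your forward-direction plan is essentially the paper's own argument, routed through Lemma~\ref{l51} rather than computed directly. One correction: the ``nontrivial rotation'' obstacle you anticipate does not arise. In the surviving case $r=qt$, $s=pl$ (so $\widetilde a=\widetilde{tl^{-1}}$), apply Lemma~\ref{l51} to $b=rs^{-1}\cdot pq^{-1}$: the same prefix relations put you in the case ``$q_b=r_b t'$ and $p_b=s_b l'$'' with $t'=l$, $l'=t$, giving $\widetilde b=\widetilde{l't'^{-1}}=\widetilde{tl^{-1}}=\widetilde a$ on the nose. The symmetric surviving case is identical. So there is no rotation to rule out, and your ``crux'' paragraph can be deleted.

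There is, however, a genuine gap in your converse. You propose to show $a\cp\widetilde a$ and $\widetilde a\cp b$ and conclude $a\cp b$. But that is $\cp\circ\cp$, not $\cp$, and you cannot invoke transitivity: $\cp$ is not transitive in $P_n$, and the partial transitivity result (Theorem~\ref{p53}) is proved \emph{after} this lemma using Theorem~\ref{ccp}, which in turn uses this lemma. You need a single-step factorisation. With $a=r\widetilde a r^{-1}$ and $b=s\widetilde a s^{-1}$, take
\[
a=(r\widetilde a s^{-1})\cdot(sr^{-1}),\qquad b=(sr^{-1})\cdot(r\widetilde a s^{-1}),
\]
which exhibits $a\cp b$ directly. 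This is exactly what the paper does.
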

\begin{proof}
Suppose that  $a\cp b$. Then there exist elements $x,y,u,v\in A_n^*$ such that
$a=yx^{-1}\cdot vu^{-1}$ and $b=vu^{-1}\cdot yx^{-1}$.
Since $a\ne0$, we know that $x$ and $v$ are prefix-comparable. Similarly, since $b\ne0$, $u$ and $y$ are prefix-comparable.

Suppose first that $x$ is a prefix of $v$ and $u$ is a prefix of $y$, that is,
$v=xp$ and $y=uq$ for some $p,q\in A_n^*$. Hence $a=uqx^{-1}xpu^{-1}=uqpu^{-1}$,
and so $\tla =qp\neq 0$, which is a contradiction.
Similarly, we obtain a contradiction if we assume that $v$ is a prefix of $x$
and $y$ is a prefix of $u$.

Suppose that $x$ is a prefix of $v$ and that $y$ is a prefix of $u$, that is,
$v=xp$ and $u=yq$ for some $p,q\in A_n^*$. Then
$a=yx^{-1}xpq^{-1}y^{-1}=y(pq^{-1})y^{-1}$ and
$b=xpq^{-1}y^{-1}yx^{-1}=x(pq^{-1})x^{-1}$. Thus $\widetilde{a}=\widetilde{pq^{-1}}=\widetilde{b}$ as
required.
In a similar way we obtain the intended result if $v$ is a prefix of $x$ and $u$
is a prefix of $y$.

The converse follows easily by first noticing that $a=r\widetilde{a}r^{-1}$ and
$b=s\widetilde{b}s^{-1}$, for some $r,s\in A_n^*$ by (\ref{charc_reduced}). If
$\widetilde{a}=\widetilde{b}$ we get the required result since $a=r\widetilde{a}s^{-1}\cdot sr^{-1}$
and $b= sr^{-1}\cdot r\widetilde{b}s^{-1}$.
\end{proof}

The following theorem characterizes $p$-conjugacy in $P_n$.

\begin{theorem}\label{ccp}
Let $a,b\in P_n$. Then $a\cp b$ if and only if one of the following conditions
is satisfied:
\begin{itemize}
\item[\textrm(a)] $a=\tlb=0$ or $\tla=b=0$;
\item[\textrm(b)] $\tla=\tlb=0$ and $\widetilde{a}=\widetilde{b}$;
\item[\textrm(c)] $\widetilde{a}, \widetilde{b}\in A_n^*$ and
$\widetilde{a}\cp\widetilde{b}$ in the free monoid
$A_n^*$; or
\item[\textrm(d)] $\widetilde{a}, \widetilde{b}\in (A_n^{-1})^*$ and
$\widetilde{a}\cp\widetilde{b}$ in the free
monoid $(A_n^{-1})^*$.
\end{itemize}
\end{theorem}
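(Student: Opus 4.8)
The plan is to reduce everything to the previously established lemmas by a case analysis on whether $\tla$ and $\tlb$ are zero. The backward direction is easy in every case: if (a) holds then one of $a,b$ is $\cp$-related to $0$ (since $a\cp\tla$ and $\cp$ is symmetric), and the hypothesis forces the other to be $0$ as well, hence $a\cp b$ reduces to $0\cp 0$ or to $a\cp 0\cp b$ \emph{after} we note transitivity is available here because one side is literally $0$; more precisely, from $a=\tlb=0$ we get $a=0$ and $b\cp 0=a$. Case (b) is exactly the backward direction of Lemma~\ref{l453}. For (c) and (d), if $\widetilde a\cp\widetilde b$ in the free monoid, then since $a\cp\widetilde a$ and $b\cp\widetilde b$ and $\cp$ in a free monoid is transitive (and more to the point these particular chains can be composed explicitly using $a=r\widetilde a r^{-1}$, $b=s\widetilde b s^{-1}$ from \eqref{charc_reduced}), we obtain $a\cp b$: writing $\widetilde a=\alpha\beta$, $\widetilde b=\beta\alpha$ with $\alpha,\beta\in A_n^*$ (or in $(A_n^{-1})^*$), we have $a=r\alpha\cdot\beta r^{-1}$ and $b=sr^{-1}\cdot r\beta\alpha s^{-1}$, and one checks directly that $a\cp b$ by exhibiting the two factorizations.

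For the forward direction, suppose $a\cp b$. First dispose of the case where one of $\tla,\tlb$ is $0$. If $\tla=0$, then $a\cp 0$ by Lemma~\ref{l51aa}, so $b\cp 0$ by symmetry and transitivity-through-zero, hence $\tlb=0$ by Lemma~\ref{l51aa} again; now either $a=0$ (giving the right clause of (a)) or $a\neq 0\neq b$, and then Lemma~\ref{l453} gives $\widetilde a=\widetilde b$, which is (b). The symmetric argument handles $\tlb=0$. So assume $\tla\neq 0\neq\tlb$; in particular $a,b\neq 0$. By fact (d) in the list preceding Lemma~\ref{l51}, $\tla\neq 0$ forces $\widetilde a\in A_n^*$ or $\widetilde a\in(A_n^{-1})^*$, and similarly for $\widetilde b$.

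The core of the argument is then to show that $a\cp b$ with $\tla,\tlb\neq 0$ forces $\widetilde a$ and $\widetilde b$ to lie in the \emph{same} one of $A_n^*$, $(A_n^{-1})^*$, and to be $\cp$-related there. Write $a=yx^{-1}\cdot vu^{-1}$ and $b=vu^{-1}\cdot yx^{-1}$ with all of $x,y,u,v\in A_n^*$; since $a,b\neq 0$, by Lemma~\ref{lbas}(1) the pairs $(x,v)$ and $(u,y)$ are each prefix-comparable. Running through the resulting cases with Lemma~\ref{l51} (which computes $\widetilde a$ directly from such a product, and whose cases for $\widetilde b$ are obtained by swapping the two factors), one sees that in the ``mixed'' sub-cases — $x$ a prefix of $v$ but $y$ a prefix of $u$, or the dual — we would get $\widetilde a$ of the form $\widetilde{\ell t^{-1}}$ with both $\ell,t$ possibly nonempty, and such a word has $\tla=\rho(\widetilde a)=0$ unless it degenerates into $A_n^*$ or $(A_n^{-1})^*$; the hypothesis $\tla\neq0$ rules out the genuinely mixed situation, exactly as in the proof of Lemma~\ref{l453}. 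In the surviving sub-cases one gets $\widetilde a=\ell t$ and $\widetilde b=t\ell$ with $\ell,t\in A_n^*$ (the ``$A_n^*$ case''), or $\widetilde a=(\ell t)^{-1}=t^{-1}\ell^{-1}$ and $\widetilde b=(\ell t)^{-1}$ read from the other factorization giving $\ell^{-1}t^{-1}$, landing in $(A_n^{-1})^*$; either way $\widetilde a\cp\widetilde b$ in the appropriate free monoid, which is (c) or (d).

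\textbf{Main obstacle.} The delicate point is the bookkeeping in the last paragraph: correctly enumerating the prefix-comparability sub-cases for the product $yx^{-1}\cdot vu^{-1}$, tracking how each sub-case for $a$ pairs with the corresponding sub-case for $b=vu^{-1}\cdot yx^{-1}$, and verifying that precisely the ``mixed'' sub-cases produce $\tla=0$ (contradiction) while the others yield a bona fide cyclic-conjugacy relation $\widetilde a\cp\widetilde b$ in one fixed free submonoid. This is essentially a more careful rerun of the Lemma~\ref{l453} computation, now without the simplifying assumption $\tla=\tlb=0$, and it must also confirm that $\widetilde a$ and $\widetilde b$ cannot end up one in $A_n^*$ and the other in $(A_n^{-1})^*$ — which follows because $\widetilde a\cp\widetilde b$ in a free monoid preserves the multiset of letters, so if $\widetilde a$ is a nonempty positive word then so is $\widetilde b$ (and the empty-word case is covered by (b) with $\widetilde a=\widetilde b=1$).
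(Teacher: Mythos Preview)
Your forward direction has a circularity problem. When you assume $\tla=0$ and deduce ``$b\cp 0$ by symmetry and transitivity-through-zero'', you are implicitly invoking Theorem~\ref{p53}(1) (transitivity of $\cp$ through a nonzero middle element: here $0\cp a\cp b$ with $a\ne 0$). But Theorem~\ref{p53} is proved \emph{using} Theorem~\ref{ccp}, so it is not available here. No lemma established before this point lets you conclude $b\cp 0$ from $a\cp b$ and $a\cp 0$ when $a\ne 0$; the only route is the Lemma~\ref{l51} case analysis, which you have deferred to the later paragraph. The paper sidesteps this by splitting on whether $a$ or $b$ equals $0$ (not on whether $\tla$ or $\tlb$ equals $0$): if $a=0$ or $b=0$, Lemma~\ref{l51aa} gives (a) immediately; if both are nonzero, the four prefix-comparability cases of Lemma~\ref{l51} applied to $a=pq^{-1}\cdot rs^{-1}$ and $b=rs^{-1}\cdot pq^{-1}$ yield (b), (c), or (d) directly, with the two ``mixed'' cases producing $\widetilde a=\widetilde b$ and hence covering \emph{both} the sub-case $\tla=0$ (giving (b)) and the sub-case $\tla\ne 0$ (a trivial instance of (c) or (d)). Your core paragraph essentially carries out this same analysis, so the fix is just to drop the preliminary reduction on $\tla$ and run the case analysis for all nonzero $a,b$.

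A smaller point: in your backward direction for (c), the displayed factorizations $a=r\alpha\cdot\beta r^{-1}$ and $b=sr^{-1}\cdot r\beta\alpha s^{-1}$ do not share factors, so they do not witness $a\cp b$. The correct single witness (as in the paper) is $a=(r\alpha s^{-1})(s\beta r^{-1})$ and $b=(s\beta r^{-1})(r\alpha s^{-1})$.
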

\begin{proof}
Suppose that $a\cp b$. If $a=0$ or $b=0$, then (a) holds by Lemma~\ref{l51aa}.
Now assume that $a$ and $b$ are nonzero elements. Then, there are
$p,q,r,s\in A_n^*$ such that $a=pq^{-1}\cdot rs^{-1}$ and $b=rs^{-1}\cdot
pq^{-1}$.
By Lemma~\ref{l51}, we have:
if $r=qt$ and $p=sl$, then $\widetilde{a}=lt$ and $\widetilde{b}=tl$, so (c) holds;
if $q=rt$ and $s=pl$, then $\widetilde{a}=(lt)^{-1}$ and $\widetilde{b}=(tl)^{-1}$, so (d) holds;
if $r=qt$ and $s=pl$, then $\widetilde{a}=\widetilde{b}=\widetilde{tl^{-1}}$; and
if $q=rt$ and $p=sl$, then $\widetilde{a}=\widetilde{b}=\widetilde{lt^{-1}}$.
In the last two cases, we have $\widetilde{a}=\widetilde{b}$, and so $\tla=\tlb$. Thus, in those cases,
either (c) or (d) holds (if $\tla=\tlb\ne0$),
or (b) holds by Lemma~\ref{l453} (if $\tla=\tlb=0$).

Conversely, if (a) holds then $a\cp b$ by Lemma~\ref{l51aa};
and if (b) holds then $a\cp b$ by Lemma~\ref{l453}.
Suppose that (c) holds and let $\widetilde{a}=uv$ and $\widetilde{b}=vu$, where
$u,v\in A_n^*$.
Then $a=puvp^{-1}$ and $b=qvuq^{-1}$ for some $p,q\in A_n^*$ by
(\ref{charc_reduced}).
Hence $a= puq^{-1}\cdot qvp^{-1}$ and $b= qvp^{-1}\cdot puq^{-1}$, and so $a\cp
b$ as required.
The proof in the case when (d) holds is similar.
\end{proof}

It is worth noting that the nonzero idempotents of $P_n$ form a single
$p$-conjugacy class. Indeed, the nonzero idempotents have the form $xx^{-1}$, and
$\widetilde{a}=1$ if and only if $a$ is an idempotent. So, they form a single $p$-conjugacy class
by Theorem~\ref{ccp}.

We recall that $\cp$ is transitive in any free monoid. For the polycyclic
monoid, we have the following result.
\begin{theorem}
\label{p53}
In the polycyclic monoid $P_n$, we have:
\begin{itemize}
\item [\rm(1)] for all $a,b,c\in P_n$ with $b\ne0$, if $a\cp b$ and $b \cp c$,
then $a \cp c$;
\item [\rm(2)] $\cp\,\circ\cp\,\,=\,\,\cp^*$.
\end{itemize}
\end{theorem}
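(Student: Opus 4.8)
The plan is to prove part~(1) by a careful case analysis using Theorem~\ref{ccp}, and then deduce part~(2) as a formal consequence.

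For part~(1), suppose $a\cp b$ and $b\cp c$ with $b\ne 0$. Since $b\ne 0$, we have $\tlb\ne 0$ whenever $\widetilde{b}$ is nontrivial, but more to the point, neither (a) can apply with $b$ playing the role of the zero element; so I would organize the argument around which of the conditions (b), (c), (d) of Theorem~\ref{ccp} holds for the pair $(a,b)$, and which holds for $(b,c)$. The key structural observation is that $\widetilde{b}$ is a single cyclically reduced word attached to $b$, and each of (b), (c), (d) records information about $\widetilde{b}$ (namely: $\widetilde{b}\in A_n^*$, or $\widetilde{b}\in(A_n^{-1})^*$, or $\widetilde{b}=\tlb=0$); these three alternatives for $\widetilde{b}$ are mutually exclusive except that $\widetilde{b}=1$ lies in both $A_n^*$ and $(A_n^{-1})^*$ (the idempotent case, which is harmless since then all of $a,b,c$ are idempotents and pairwise $\cp$-related by the remark after Theorem~\ref{ccp}). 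So, up to that degenerate overlap, the condition witnessing $a\cp b$ and the one witnessing $b\cp c$ must be the \emph{same} one among (b), (c), (d). In case (b), $\widetilde{a}=\widetilde{b}=\widetilde{c}$ and $\tla=\tlb=\tlc=0$, so $a\cp c$ by Lemma~\ref{l453} (equivalently condition (b) again). In case (c), $\widetilde{a}\cp\widetilde{b}$ and $\widetilde{b}\cp\widetilde{c}$ in the free monoid $A_n^*$, and since $\cp$ is transitive in a free monoid (\cite[Corollary~5.2]{La79}), $\widetilde{a}\cp\widetilde{c}$ in $A_n^*$, so $a\cp c$ by (c). Case (d) is identical with $(A_n^{-1})^*$ in place of $A_n^*$. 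The one genuine subtlety to check is that $\widetilde{a}$ being nontrivial in $A_n^*$ really does force, when $a\cp b$, that $\widetilde{b}$ also lies in $A_n^*$ and is nontrivial — this is exactly what Theorem~\ref{ccp} gives, since (a), (b) are excluded by $\widetilde{a}\ne 0,1$ and (d) would put $\widetilde{a}\in(A_n^{-1})^*$, contradiction; so the case of $(a,b)$ determines the case of $(b,c)$ and vice versa. I expect this bookkeeping — confirming that the three "types" of cyclically reduced word are $\cp$-invariant in $P_n$ and that $b\ne 0$ is exactly what rules out the mixed possibilities — to be the main (though modest) obstacle.

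For part~(2), I would argue that $\cp\circ\cp=\cp^*$. The inclusion $\cp\circ\cp\subseteq\cp^*$ is immediate from the definition of transitive closure, and $\cp\subseteq\cp\circ\cp$ since $\cp$ is reflexive; combining these, $\cp\subseteq\cp\circ\cp\subseteq\cp^*$. It remains to show $\cp^*\subseteq\cp\circ\cp$, i.e.\ that any chain $a=a_0\cp a_1\cp\cdots\cp a_k=c$ can be shortened to a chain of length at most $2$. If some intermediate $a_i$ with $0<i<k$ is nonzero, part~(1) lets us splice out $a_i$, reducing the chain length; iterating, we reach either a chain of length $\le 2$ or a chain $a\cp 0\cp\cdots\cp 0\cp c$ in which every intermediate term equals $0$. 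In the latter situation the $\cp$-class of $0$ is $\{0\}$ (as noted in the introduction, or directly: $0=uv\Rightarrow vu=0$), so in fact the chain collapses to $a\cp 0\cp c$, which already has length $2$. Hence $a\,(\cp\circ\cp)\,c$ in all cases, giving $\cp^*\subseteq\cp\circ\cp$ and therefore equality. I would also note in passing that part~(2) shows $\cp^*$ is obtained from $\cp$ by a single composition, so testing $\cp^*$ in $P_n$ is algorithmically no harder than testing $\cp$, which ties in with the later complexity claims.
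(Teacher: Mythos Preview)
Your approach for part~(1) is essentially the paper's: classify by the ``type'' of $\widetilde{b}$ and use transitivity of $\cp$ in free monoids for cases (c) and (d), and the equality $\widetilde{a}=\widetilde{b}=\widetilde{c}$ for case (b). However, you have not fully excluded condition~(a) of Theorem~\ref{ccp}. You correctly note that $b\ne 0$ rules out the disjunct $\tla=b=0$, but the other disjunct $a=\tlb=0$ remains possible: if $a=0$ and $\tlb=0$, then $a\cp b$ holds via condition~(a), and none of (b), (c), (d) need hold for the pair $(a,b)$ (indeed (b) would force $\widetilde{b}=\widetilde{a}=0$, contradicting $b\ne 0$). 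The paper handles this explicitly: when $\tlb=0$, Theorem~\ref{ccp} applied to $a\cp b$ and $b\cp c$ forces $\tla=\tlc=0$; if then $a=0$ or $c=0$, condition~(a) yields $a\cp c$ directly, and otherwise one is in your case~(b). This is a small but genuine omission in your case split. (A side remark: your opening sentence ``$\tlb\ne 0$ whenever $\widetilde{b}$ is nontrivial'' is also false --- e.g.\ $\widetilde{b}=p_1p_2^{-1}$ has $\tlb=0$ --- though you do not actually use it.)

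For part~(2), your reduction argument is more explicit than the paper's (which simply says the statement follows from~(1)), but your final step contains a false claim. The $\cp$-class of $0$ in $P_n$ is \emph{not} $\{0\}$: by Lemma~\ref{l51aa}, $a\cp 0$ iff $\tla=0$, and many nonzero elements satisfy this (e.g.\ $p_1p_2^{-1}$). Your parenthetical justification ``$0=uv\Rightarrow vu=0$'' fails in $P_n$: take $u=p_1^{-1}$, $v=p_2$. The introductory remark you allude to concerns $\con$, not $\cp$. Fortunately the conclusion survives for a trivial reason: a chain $a\cp 0\cp\cdots\cp 0\cp c$ with all intermediates equal to $0$ collapses to $a\cp 0\cp c$ simply by dropping the redundant reflexive links $0\cp 0$, so $a\,(\cp\circ\cp)\,c$ as needed.
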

\begin{proof}
\noindent To prove (a), let $a,b,c\in P_n$ with $b\neq 0$.
Suppose that $a\cp b \cp c$.
If $\tlb\neq 0$ then, by Theorem~\ref{ccp}, either $\widetilde{a},
\widetilde{b}, \widetilde{c} \in
A_n^*$ or $\widetilde{a},
\widetilde{b}, \widetilde{c}\in (A_n^{-1})^*$,
and $a\cp c$ follows since $\cp$ is transitive in the free monoid. Suppose that
$\tlb=0$. Then, by Theorem~\ref{ccp}, $\tla=\tlc=0$. Thus,
if $a=0$ or $c=0$, then $a \cp c$. Suppose that $a\neq 0$ and $c\neq 0$. Then,
by Theorem~\ref{ccp}, $\widetilde{a}=\widetilde{b}=\widetilde{c}$, and so, again by Theorem~\ref{ccp},
$a\cp c$.

Statement (2) follows from (1).
\end{proof}

The relations $\cp$ and $\cp^*$ are not equal in $P_n$. For example, consider
the polycyclic monoid $P_2$ with $A_2=\{x,y\}$.
Then, for $a=xx^{-1}$ and $c=yy^{-1}$ in $P_2$, $a\cp 0\cp c$, so $a\cp^{*}c$,
but $(a,c)\notin\,\,\cp$
by Theorem~\ref{ccp}.

\subsection{$c$-conjugacy in $P_n$}
Referring to the definition of $\con$, we begin with a description of the set from
which the conjugators must be chosen.

\begin{lemma}
\label{lpp}
For all $x,y\in A_n^*$, $\pp(yx^{-1})=\{rs^{-1} : r\text{ is a prefix of }x\}$.
\end{lemma}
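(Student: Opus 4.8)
The plan is to unwind the definition of $\pp(yx^{-1})$ into its contrapositive form and then reduce everything to a single combinatorial fact extractable from Lemma~\ref{lbas}: whether a product $m\cdot w$ of nonzero elements vanishes depends only on the right factor of $m$ and on the \emph{left} factor of $w$. Write $a=yx^{-1}$; since $a\neq 0$, taking $m=1$ gives $1\cdot a\cdot 0=0$ while $1\cdot a=a\neq 0$, so $0\notin\pp(a)$ and every element of $\pp(a)$ is nonzero, of the form $g=rs^{-1}$. It therefore suffices to decide, for a nonzero $g=rs^{-1}$, exactly when the implication $mag=0\Rightarrow ma=0$ holds for all $m$, equivalently when $ma\neq 0\Rightarrow mag\neq 0$ for all $m$.

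For the inclusion $\supseteq$, I would take $r$ a prefix of $x$, say $x=rx'$, and compute $ag=yx^{-1}\cdot rs^{-1}=y(sx')^{-1}$ using Lemma~\ref{lbas}(2); this is nonzero because $r$, being a prefix of $x$, is prefix-comparable with $x$. The crucial point is that $a$ and $ag$ share the \emph{same} left factor $y$. Hence, writing a nonzero $m=\beta\alpha^{-1}$, Lemma~\ref{lbas}(1) yields $ma\neq 0\iff \alpha,y$ prefix-comparable $\iff m(ag)\neq 0$, and for $m=0$ both products vanish. In either case $mag=0\Rightarrow ma=0$, so $g\in\pp(a)$, and this holds for every $s\in A_n^*$.

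For the inclusion $\subseteq$, suppose $g=rs^{-1}\in\pp(a)$ and I argue that $r$ must be a prefix of $x$. If $x$ and $r$ are not prefix-comparable, then $ag=0$ by Lemma~\ref{lbas}(1), so already $m=1$ gives $ma=a\neq 0$ with $mag=0$, a contradiction. The remaining case, which I expect to be the main obstacle, is when $x$ is a \emph{proper} prefix of $r$, say $r=xz$ with $z\neq 1$: now $ag=yzs^{-1}$ is nonzero and has left factor $yz\neq y$, so $m=1$ detects nothing and one must construct a separating witness $m$. Here I would exploit $n\geq 2$: letting $z_1$ be the first letter of $z$ and choosing a letter $c\neq z_1$, set $m=(yc)^{-1}$. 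Then $y$ is a prefix of $yc$, so $ma\neq 0$ by Lemma~\ref{lbas}(1), whereas $yc$ and $yz$ diverge at position $|y|+1$ and are not prefix-comparable, forcing $mag=m(ag)=0$. This contradicts $g\in\pp(a)$, so $r$ is a prefix of $x$.

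The two inclusions together give $\pp(yx^{-1})=\{rs^{-1}:r\text{ is a prefix of }x\}$. The only genuinely delicate point is the construction of $m$ in the proper-prefix subcase, where the hypothesis $n\geq 2$ is used to pick a letter distinct from the first letter of $z$; everything else is routine bookkeeping with the multiplication rules of Lemma~\ref{lbas}, governed throughout by the observation that left factors control non-vanishing under left multiplication.
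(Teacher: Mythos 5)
Your proof is correct and follows essentially the same route as the paper's: the $\supseteq$ direction rests on the same observation that $a$ and $ag$ share the left factor $y$ (so Lemma~\ref{lbas}(1) makes them vanish against the same left multipliers), and in the $\subseteq$ direction your witness $m=(yc)^{-1}$, with $c$ chosen different from the first letter of the overhang of $r$ past $x$, is exactly the paper's witness $(yp_j)^{-1}$ with $j\neq i$. The only differences are cosmetic: you handle the $g=0$ and $m=0$ cases explicitly and evaluate $mag$ as $m(ag)$ rather than $(ma)g$, which by associativity is the same computation.
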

\begin{proof}
Let $rs^{-1}\in\pp(yx^{-1})$. Then $yx^{-1}\cdot rs^{-1}\ne0$, and so $r$
and  $x$ are prefix-comparable. Suppose that $x$ is a proper prefix of
$r$, that is, $r=xp_it$ for some
$p_i\in A_n=\{p_1,\ldots,p_n\}$ and $t\in A_n^*$. Let $j\in\{1,\ldots,n\}$ with
$j\ne i$.
Then $ (yp_j)^{-1}\cdot yx^{-1}=(xp_j)^{-1}\ne0$, while $(yp_j)^{-1}\cdot
yx^{-1}\cdot rs^{-1}=(xp_j)^{-1}\cdot rs^{-1}=0$
since neither $xp_j$ is a prefix of $r$, nor $r$ is a prefix of $xp_j$. This
contradicts the hypothesis
that $rs^{-1}\in\pp(yx^{-1})$. Therefore, $r$ is a prefix of $x$.

Now, let $rs^{-1}$ be an element of $P_n$ and assume that $r$ is a prefix of
$x$. Then $x=rz$ for some $z\in A_n^*$, which gives $yx^{-1}\cdot
rs^{-1}=y(sz)^{-1}$. Thus, for every $vu^{-1}\in P_n$,
$vu^{-1}\cdot yx^{-1}\cdot rs^{-1}=vu^{-1}\cdot y(sz)^{-1}=0$ iff $vu^{-1}\cdot
yx^{-1}=0$ (see Lemma~\ref{lbas}). Thus $rs^{-1}\in \pp(yx^{-1})$.
\end{proof}

The following theorem characterizes $c$-conjugacy in $P_n$.

\begin{theorem}
\label{p56}
Let $a,b\in P_n$. Then $a\con b$ if and only if one of the following conditions
is satisfied:
\begin{itemize}
\item[\textrm(a)] $a=b=0$;
\item[\textrm(b)] $\widetilde{a}=\widetilde{b}$; or
\item[\textrm(c)] $\widetilde{a}, \widetilde{b}\in (A_n^{-1})^*$ and
$\widetilde{a}\cp\widetilde{b}$ in the free
monoid $(A_n^{-1})^*$.
\end{itemize}
\end{theorem}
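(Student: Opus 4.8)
The plan is to mirror the structure of the proof of Theorem~\ref{ccp}, but now keeping track of which conjugators lie in the restricted sets $\pp(a)$ and $\pp(b)$ described by Lemma~\ref{lpp}. Recall that for every $a\in P_n$ we have $a\con a$, and that $a\cp\widetilde{a}$; in fact one checks that the conjugators realizing $a\cp\widetilde{a}$ can be taken in $\pp(a)$ and $\pp(\widetilde{a})$ (writing $a=r\widetilde{a}r^{-1}$, the conjugator $r^{-1}r=1$ or suitable prefixes of the relevant words), so $a\con\widetilde{a}$ as well. This reduces everything to comparing cyclically reduced words, exactly as before, so the three alternatives (a), (b), (c) are plausible; the key difference from Theorem~\ref{ccp} is that case (c) of the $p$-conjugacy theorem (nonempty positive cyclically reduced words conjugate in $A_n^*$) does \emph{not} survive, because the conjugators needed there have ``long'' prefixes that are not allowed by $\pp$.

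First I would prove the ``if'' direction. If (a) holds this is trivial. If (b) holds, $\widetilde{a}=\widetilde{b}$, and since $a\con\widetilde{a}=\widetilde{b}\con b$ and $\con$ is an equivalence relation (stated in the introduction), we get $a\con b$. If (c) holds, write $\widetilde{a}=u^{-1}v^{-1}$ type factorization: more precisely $\widetilde{a}=\alpha\beta$ and $\widetilde{b}=\beta\alpha$ with $\alpha,\beta\in(A_n^{-1})^*$. Then by \eqref{charc_reduced}, $a=r\widetilde{a}r^{-1}$ and $b=s\widetilde{b}s^{-1}$ for some $r,s\in A_n^*$; since $\widetilde{a},\widetilde{b}\in(A_n^{-1})^*$ one actually has $r=s=1$, so $a=\alpha\beta$ and $b=\beta\alpha$ directly. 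Now take $g=\alpha$ and $h=\beta$; one checks $ag=\alpha\beta\alpha=gb$ and $bh=\beta\alpha\beta=ha$, and crucially $g=\alpha\in(A_n^{-1})^*$ is of the form $1\cdot\alpha^{-1}$ so writing $a=yx^{-1}$ with $y=1$, $x=(\text{word for }\alpha^{-1})$... here I need to verify $g\in\pp(a)$ and $h\in\pp(b)$ using Lemma~\ref{lpp}, i.e. that the appropriate ``$r$'' is a prefix of the appropriate ``$x$''. This is the routine-but-delicate bookkeeping step.

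For the ``only if'' direction, suppose $a\con b$ with neither being $0$ (the zero case gives (a), since the $\con$-class of $0$ is $\{0\}$). Then there is $g\in\pp(a)$ with $ag=gb$ (and symmetrically $h\in\pp(b)$ with $bh=ha$). Replace $a,b$ by $\widetilde a,\widetilde b$ (legitimate since $\con$ is an equivalence and passing to cyclically reduced forms only changes things within a $\con$-class — but I should double check the conjugators transform correctly, or simply argue directly on $\widetilde a,\widetilde b$). Write $g=rs^{-1}$; by Lemma~\ref{lpp}, $r$ is a prefix of the denominator of $a$. Expand $ag$ and $gb$ using Lemma~\ref{lbas} and compare normal forms; the constraint that $r$ is a prefix of $a$'s denominator forces $g$ to ``act trivially enough'' that one cannot move a nonempty positive prefix around a positive cyclically reduced word. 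Carefully going through the cases of Lemma~\ref{lbas}(2) for both products, I expect to land in: either $\widetilde a=\widetilde b$, giving (b); or both $\widetilde a,\widetilde b$ lie in $(A_n^{-1})^*$ and the equations $ag=gb$, $bh=ha$ reduce to cyclic-permutation equations in the free monoid $(A_n^{-1})^*$, giving (c). The positive case $\widetilde a,\widetilde b\in A_n^*$ collapses to $\widetilde a=\widetilde b$ precisely because the only allowed conjugators $g=rs^{-1}$ have $r$ a prefix of the (empty or short) denominator, ruling out genuine nontrivial cyclic shifts.

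The main obstacle is the $\pp$-membership bookkeeping: in both directions I must repeatedly translate ``$g=rs^{-1}\in\pp(yx^{-1})$'' into ``$r$ is a prefix of $x$'' and track how this prefix condition interacts with the case split of Lemma~\ref{lbas}(2) when computing $ag$ and $gb$. The asymmetry between positive and negative cyclically reduced words — which is what breaks the symmetry between cases (c) and (d) of Theorem~\ref{ccp} and collapses the positive case here — comes entirely from the fact that $\pp$ restricts the \emph{denominator} side of the conjugator, so I expect the cleanest route is to handle the positive, negative, and mixed ($\widetilde a=0$) forms of $\widetilde a$ as separate cases and in each one solve the word equations $ag=gb$ and $bh=ha$ explicitly.
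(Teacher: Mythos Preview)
Your overall strategy is close to the paper's, and the idea of first passing to cyclically reduced forms via $a\con\widetilde a$ is a genuine simplification (the paper works directly with general $a=yx^{-1}$, $b=vu^{-1}$ and only extracts $\widetilde a,\widetilde b$ at the end of the computation). However, there is a concrete error in your ``if'' direction, case (c).

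You write: ``by \eqref{charc_reduced}, $a=r\widetilde{a}r^{-1}$ and $b=s\widetilde{b}s^{-1}$ for some $r,s\in A_n^*$; since $\widetilde{a},\widetilde{b}\in(A_n^{-1})^*$ one actually has $r=s=1$.'' This is false. For example, $a=p_2(p_2p_1)^{-1}=p_2p_1^{-1}p_2^{-1}$ is irreducible, not cyclically reduced, has $\widetilde a=p_1^{-1}\in(A_n^{-1})^*$, and $r=p_2\neq1$. So your choice of conjugators $g=\alpha$, $h=\beta$ does not apply to $a$ and $b$ themselves. You have two clean fixes. Either first invoke $a\con\widetilde a$ and $b\con\widetilde b$ (exactly as you do in case (b)) and then your $g=\alpha$, $h=\beta$ argument works verbatim for $\widetilde a$ and $\widetilde b$; indeed, writing $\alpha=c^{-1}$, $\beta=d^{-1}$ with $c,d\in A_n^*$, one has $\widetilde a=1\cdot(dc)^{-1}$ and $g=1\cdot c^{-1}$, so $g\in\pp(\widetilde a)$ by Lemma~\ref{lpp}. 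Or, keep $a,b$ general and build the conjugators explicitly as the paper does: with $\widetilde a=t^{-1}$, $\widetilde b=z^{-1}$, $a=y(yt)^{-1}$, $b=v(vz)^{-1}$, and $w,w'\in A_n^*$ witnessing $t\cp z$, the paper takes $g=y(vw')^{-1}\in\pp(a)$ and $h=v(yw)^{-1}\in\pp(b)$ and checks the equations directly.

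For the ``only if'' direction your plan is sound, but note two things. First, the paper gets by with \emph{one} conjugator: from $g=rs^{-1}\in\pp(a)$ and $ag=gb$ alone (without using $h$) it already derives (b) or (c), splitting only on whether $v$ is a prefix of $s$ or $s$ is a prefix of $v$. Second, if you do reduce to cyclically reduced forms first, be aware that the conjugator changes: you must argue about some $g'\in\pp(\widetilde a)$, not the original $g\in\pp(a)$. This is legitimate because $\con$ is an equivalence, but your parenthetical ``I should double check the conjugators transform correctly'' is exactly the point that needs to be made precise rather than waved at. After that reduction, your intuition is correct: when $\widetilde a\in A_n^*$, Lemma~\ref{lpp} forces the numerator of $g'$ to be empty, and the equation $\widetilde a\,g'=g'\,\widetilde b$ then forces $\widetilde b=\widetilde a$, collapsing the positive case into (b).
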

\begin{proof}
Suppose that $a\con b$. If $a=0$ or $b=0$, then (a) holds since $[0]_c=\{0\}$.
Suppose $a,b\ne0$. Then, there exist $x,y,u,v\in A_n^*$ such that $a=yx^{-1}$
and $b=vu^{-1}$;
and there exist $r,s\in A_n^*$ such that $rs^{-1}\in\pp(yx^{-1})$ and
$yx^{-1}\cdot rs^{-1}=rs^{-1}\cdot vu^{-1}$.
By Lemma~\ref{lpp}, $x=rz$ for some $z\in A_n^*$. By Lemma~\ref{lbas},
$s$ and $v$ are prefix-comparable.

Suppose $v=sw$ for some $w\in A_n^*$. Then, $yx^{-1}\cdot
rs^{-1}=yz^{-1}r^{-1}rs^{-1}=y(sz)^{-1}$
and $rs^{-1}\cdot vu^{-1}=rs^{-1}swu^{-1}=rwu^{-1}$. Thus, since  $yx^{-1}\cdot
rs^{-1}=rs^{-1}\cdot vu^{-1}$,
we have $y(sz)^{-1}=rwu^{-1}$, and so $y=rw$ and $u=sz$.
Hence,
$a=yx^{-1}=rw(rz)^{-1}=r(wz^{-1})r^{-1}$ and $b=vu^{-1}=sw(sz)^{-1}=
s(wz^{-1})s^{-1}$, and so (b) holds.

Suppose $s=vw$ for some $w\in A_n^*$. Then, $yx^{-1}\cdot rs^{-1}=y(sz)^{-1}$
(as in the previous case)
and $rs^{-1}\cdot vu^{-1}=rw^{-1}v^{-1}vu^{-1}=r(uw)^{-1}$. Thus,
$y(sz)^{-1}=r(uw)^{-1}$, and so $y=r$
and $sz=uw$. Since $s=vw$, we have $vwz=uw$, which implies that $u=vt$ for some
$t\in A_n^*$. Thus, $uw=vtw$,
which implies $vwz=vtw$, and so $wz=tw$.
By \cite[Corollary~5.2]{La79}, $tw=wz$ implies $t\cp z$ in $A_n^*$.
Further,
$\widetilde{a}=\widetilde{yx^{-1}}=\wti{r(rz)^{-1}}=\wti{rz^{-1}r^{-1}}=z^{-1}$
and
$\widetilde{b}=\widetilde{vu^{-1}}=\wti{v(vt)^{-1}}=\wti{vt^{-1}v^{-1}}=t^{-1}$. Hence $\widetilde{a},
\widetilde{b}\in
(A_n^{-1})^*$.
Since $t\cp z$ in the free monoid $A_n^*$,
we have $z^{-1}\cp t^{-1}$ in $(A_n^{-1})^*$, and so
$\widetilde{a}\cp\widetilde{b}$ in
$(A_n^{-1})^*$. Hence
(c) holds.

Conversely, if (a) holds, then clearly $a\con b$. Suppose that (b) holds, that is,
$\widetilde{a}=\widetilde{b}$. Let $r,s\in A_n^*$ be such that
$a=r\widetilde{a}r^{-1}$ and $b=s\widetilde{b}s^{-1}$.
 Then, by Lemma~\ref{lpp}, $rs^{-1}\in\pp(a)$, $sr^{-1}\in\pp(b)$, and
\begin{align*}
a\cdot rs^{-1} &= r\widetilde{a}r^{-1}rs^{-1} = r\widetilde{a}s^{-1}
= rs^{-1}s\widetilde{a}s^{-1} = rs^{-1}\cdot b\,,\\
b\cdot sr^{-1} &= s\widetilde{b}s^{-1}sr^{-1} = s\widetilde{b}r^{-1}
= sr^{-1}r\widetilde{b} r^{-1} = sr^{-1}\cdot a\,.
\end{align*}
Hence $a\con b$.
Now, suppose that (c) holds. Since $\widetilde{a}, \widetilde{b}\in
(A_n^{-1})^*$, then letting $t^{-1}=\widetilde{a}$ and $z^{-1}=\widetilde{b}$
we have
$a=yt^{-1}y^{-1}=y(yt)^{-1}$ and $b=vz^{-1}v^{-1}=v(vz)^{-1}$ for some $y,v\in
A_n^*$. Moreover, we have
$\widetilde{a}\cp\widetilde{b}$ in the free monoid $(A_n^{-1})^*$, and
so $t\cp z$ in
$A_n^*$ as well.
Hence $tw=wz$ and $w't=zw'$ for some $w,w'\in A_n^*$. By Lemma~\ref{lpp},
$y(vw')^{-1}\in\pp(a)$ and $v(yw)^{-1}\in\pp(b)$. Further,
\begin{align*}
a\cdot y(vw')^{-1} &= y(yt)^{-1}y(vw')^{-1} = yt^{-1}(vw')^{-1} = y(vw't)^{-1} = \\
&= y(vzw')^{-1} = y(w')^{-1}(vz)^{-1} = y(vw')^{-1}v(vz)^{-1} = y(vw')^{-1}\cdot b\,,\\
b\cdot v(yw)^{-1} &= v(vz)^{-1}v(yw)^{-1} = vz^{-1}(yw)^{-1} = v(ywz)^{-1} = \\
&= v(ytw)^{-1} = vw^{-1}(yt)^{-1} = v(yw)^{-1}y(yt)^{-1} = v(yw)^{-1}\cdot a\,.
\end{align*}
Hence $a\con b$, which concludes the proof.
\end{proof}

As for $p$-conjugacy, the nonzero idempotents form a single $c$-conjugacy
class (see Theorem~\ref{p56} and the paragraph after Theorem~\ref{ccp}). Moreover, we have the
following strict inclusion between $\con$ and $\cp$.

\begin{cor}
\label{cpc}
In the polycyclic monoid $P_n$, $\con\,\,\subset\,\,\cp$.
\end{cor}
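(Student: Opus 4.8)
The plan is to prove both the inclusion $\con\,\subseteq\,\cp$ and its properness by comparing the two characterizations already established, namely Theorem~\ref{p56} for $\con$ and Theorem~\ref{ccp} for $\cp$. For the inclusion, suppose $a\con b$ and examine the three cases of Theorem~\ref{p56}. In case~(a) we have $a=b=0$, and then $\tla=\tlb=0$ with $\widetilde{a}=\widetilde{b}$ (both empty or zero), so case~(b) of Theorem~\ref{ccp} applies and $a\cp b$. In case~(c) of Theorem~\ref{p56} we have $\widetilde{a},\widetilde{b}\in(A_n^{-1})^*$ with $\widetilde{a}\cp\widetilde{b}$ in $(A_n^{-1})^*$, which is exactly case~(d) of Theorem~\ref{ccp}, so again $a\cp b$. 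The only case requiring a small argument is case~(b) of Theorem~\ref{p56}, where $\widetilde{a}=\widetilde{b}$; here I would split according to fact~(d) recorded before Lemma~\ref{l51}: if $\widetilde{a}=\widetilde{b}\in A_n^*$ then $\widetilde{a}\cp\widetilde{b}$ trivially (reflexivity of $\cp$ in the free monoid $A_n^*$) and case~(c) of Theorem~\ref{ccp} applies; if $\widetilde{a}=\widetilde{b}\in(A_n^{-1})^*$ then similarly case~(d) applies; and if $\widetilde{a}=\widetilde{b}$ is $0$ (or empty, handled above) then $\tla=\tlb=0$ by fact~(d), so case~(b) of Theorem~\ref{ccp} applies. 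In every case $a\cp b$, giving $\con\,\subseteq\,\cp$.

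For properness, I would exhibit a single pair $(a,b)$ with $a\cp b$ but not $a\con b$. The natural choice is to use the failure of transitivity of $\cp$ near zero that was already observed after Theorem~\ref{p53}: in $P_2$ with $A_2=\{x,y\}$, take $a=xx^{-1}$ and $b=yx^{-1}$ --- or even simpler, take $a$ and $b$ to be two distinct nonzero idempotents, say $a=xx^{-1}$ and $b=yy^{-1}$. Wait: these idempotents have $\widetilde{a}=\widetilde{b}=1$, so they are $\con$-related; that does not work. A cleaner witness is $a=0$ and $b=xx^{-1}$ is also wrong since $b\not\cp 0$. Instead I would take, in $P_2$, $a = x y^{-1}$ and $b = y x^{-1}$: then $\widetilde{a} = \widetilde{xy^{-1}}$ which is cyclically reduced (first letter $x$, last letter $y^{-1}$, and $x\ne (y^{-1})^{-1}=y$), so $\widetilde{a}=xy^{-1}$, and likewise $\widetilde{b}=yx^{-1}$. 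By fact~(d), $\tla$: since neither $x$ is a prefix of $y$ nor $y$ a prefix of $x$, $\tla=0$; similarly $\tlb=0$. Hence by Theorem~\ref{ccp}(b), $a\cp b$ would require $\widetilde{a}=\widetilde{b}$, which fails --- so this pair is not $\cp$-related either. Let me reconsider.

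The correct witness exploits case~(c)/(d) asymmetry: $\cp$ allows $\widetilde{a},\widetilde{b}\in A_n^*$ cyclically conjugate, whereas $\con$ in case~(b) demands equality $\widetilde{a}=\widetilde{b}$ and case~(c) only covers the $(A_n^{-1})^*$ side. So take $a$ with $\widetilde{a}=xy$ and $b$ with $\widetilde{b}=yx$ in $P_2$, e.g. $a=xy$ and $b=yx$ themselves (these are already cyclically reduced words in $A_2^*$). Then $\widetilde{a}=xy\cp yx=\widetilde{b}$ in $A_2^*$, so $a\cp b$ by Theorem~\ref{ccp}(c). But $\widetilde{a}=xy\ne yx=\widetilde{b}$, and $\widetilde{a}\notin(A_2^{-1})^*$, so neither (b) nor (c) of Theorem~\ref{p56} holds, and (a) fails since $a\ne0$; hence $a\not\con b$. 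This establishes $\con\,\subsetneq\,\cp$ and completes the proof. The only genuine subtlety is the bookkeeping in the inclusion argument --- making sure every subcase of $\widetilde{a}=\widetilde{b}$ lands in some clause of Theorem~\ref{ccp} --- but this is routine given fact~(d); there is no real obstacle.
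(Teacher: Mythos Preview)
Your approach is correct and essentially identical to the paper's: both prove the inclusion by matching the clauses of Theorem~\ref{p56} against those of Theorem~\ref{ccp}, and both witness properness via a pair whose cyclic reductions are $xy$ and $yx$ in $A_n^*$ (the paper uses $a=xxyx^{-1}$, $b=yyxy^{-1}$, while your final choice $a=xy$, $b=yx$ is simpler and works for the same reason).

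Two small points. First, in your inclusion argument the phrase ``if $\widetilde{a}=\widetilde{b}$ is $0$'' is not quite right: for nonzero $a$ the word $\widetilde{a}$ is never $0$ (it is obtained by reducing only with $R_1$, not the zero rules). The remaining subcase you need is when $\widetilde{a}=\widetilde{b}$ contains letters from both $A_n$ and $A_n^{-1}$; then fact~(d) gives $\tla=\tlb=0$, and clause~(b) of Theorem~\ref{ccp} applies exactly as you say. Second, the exploratory false starts in your properness argument should be cut from a final write-up; the last paragraph already contains the clean example and the complete verification.
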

\begin{proof}
The inclusion $\con\,\,\subseteq\,\,\cp$ follows by Theorems~\ref{ccp} and~\ref{p56}.
To show that $\con$ is properly contained in $\cp$, consider two distinct
generators $x$ and $y$ in $A_n$.
Let $a=xxyx^{-1}$ and $b=yyxy^{-1}$ in $P_n$. Then $\widetilde{a}=xy$ and
$\widetilde{b}=yx$.
Hence $\widetilde{a}\cp\widetilde{b}$ in the free monoid $A_n^*$,
and so $a\cp b$ in $P_n$ by Theorem~\ref{ccp}. On the other hand, none of (a), (b), or (c)
of Theorem~\ref{p56} holds for $a$ and $b$, and so $a\not\con b$ in $P_n$.
\end{proof}

\subsection{Decidability and complexity of conjugacy in $P_n$}

It is known that for free monoids, the $p$-conjugacy problem is decidable in linear
time \cite[Theorem~2.5]{AM1980}. We will show that the same result is true for
the $p$-conjugacy and $c$-conjugacy problems for the polycyclic monoids.

The following lemma is a special case of \cite[Theorem~4.1]{Bo1982}.

\begin{lemma}\label{lemma:irreducible_linear}
Let  $(\Sigma_0;R)$ be the monoid-with-zero presentation of $P_n$, and
let $w\in \Sigma_0^*$.  Then the irreducible element $\overline{w}\in \Sigma_0^*$ such that
$w\ra \overline{w}$ in $P_n$ can be computed in time $O(|w|)$.
\end{lemma}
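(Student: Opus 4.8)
The plan is to treat this as a direct instance of the linear-time normalization result for finite monadic confluent rewriting systems. The monoid presentation $(\Sigma_0; R\cup R_0)$ of $P_n$ is finite and monadic (each rule has the form $(x,y)$ with $y\in\Sigma_0\cup\{1\}$ and $|x|>|y|$), and it is complete, hence confluent, so it satisfies the hypotheses of \cite[Theorem~4.1]{Bo1982}. To make the special case transparent, however, I would exhibit an explicit one-pass algorithm realizing the bound and verify its correctness directly, since the irreducible elements of $P_n$ have the especially simple shape $yx^{-1}$ (with $y,x\in A_n^*$) or $0$.

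The algorithm scans $w$ from left to right while maintaining a stack $S$ whose content is always an irreducible word, kept in the canonical shape of a (possibly empty) block of generators followed by a block of inverse generators. Processing the next input symbol $c$ proceeds by cases. If $c=0$, or if a reduction below produces $0$, the algorithm halts and returns $\overline{w}=0$, since $0$ absorbs every letter under $R_0$. If $c$ is an inverse generator $p_i^{-1}$, it is pushed onto $S$; this never creates a redex, since no left-hand side of $R$ has the form $p_i^{-1}p_j^{-1}$ or $p_k\,p_i^{-1}$. If $c$ is a generator $p_j$ and the top of $S$ is an inverse generator $p_i^{-1}$, then the only possible redex is the factor $p_i^{-1}p_j$ at the top: when $i=j$ the rule $p_i^{-1}p_i=1$ applies, so the top is popped and $c$ is discarded; when $i\neq j$ the rule $p_i^{-1}p_j=0$ applies, so the algorithm halts with output $0$. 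In every remaining case (stack empty, or top a generator) the symbol $p_j$ is pushed. When the input is exhausted, the content of $S$ is returned as $\overline{w}$.

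Correctness rests on two observations. First, the stack invariant is preserved: pushing an inverse generator keeps inverse generators on top, and a generator is pushed only when the top is a generator or the stack is empty, so the block structure $yx^{-1}$ is maintained, while a cancellation merely removes the topmost inverse generator. Second, the monadic shape guarantees that whenever a redex exists it occurs precisely at the junction between the last inverse generator already on the stack and an incoming generator, so it suffices to inspect the top of $S$. Since the presentation is confluent, the irreducible word produced by this particular reduction order equals the unique irreducible representative $\overline{w}$, independent of strategy. For the running time, each input symbol triggers at most one stack operation (one push, or one pop when a cancellation occurs) together with $O(1)$ comparisons, and no symbol is reprocessed; hence the total cost is $O(|w|)$.

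The step I expect to require the most care is the justification that a single left-to-right pass, applying rules only at the top of the stack, never misses a redex and never needs to revisit already-processed symbols. This is exactly where the monadic structure is essential: because a cancellation consumes the incoming generator rather than leaving fresh material between two previously processed letters, no reduction cascades backwards into the stack, so the per-symbol work is genuinely $O(1)$ rather than merely amortized. The remaining subtlety is the bookkeeping for the absorbing element $0$, which must short-circuit the scan as soon as $0$ is read or produced.
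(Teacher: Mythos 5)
Your proposal is correct, and its opening move is exactly the paper's entire proof: the paper justifies this lemma with the single remark that it is a special case of \cite[Theorem~4.1]{Bo1982}, applicable because the presentation $(\Sigma_0;R\cup R_0)$ is finite, monadic (hence length-reducing) and confluent. Everything after that in your write-up is extra, and it is sound: the stack invariant (content irreducible, of shape ``generator block followed by inverse-generator block'') holds, each step of the algorithm is a legitimate application of a rule of $R\cup R_0$, the early exit on $0$ is justified by the absorbing rules of $R_0$, and confluence plus noetherianity guarantees that the irreducible word reached by this particular strategy is $\overline{w}$. What your explicit algorithm buys is independence from Book's general machinery and a transparent reason why the scan is genuinely one-pass: since every left-hand side of $R$ has the form $p_i^{-1}p_j$, a cancellation consumes the incoming generator rather than exposing new material, so no reduction cascades into the stack. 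One small precision worth making: the reason a redex can only sit at the junction is not the monadic property by itself but the combination of the invariant (the stack content is irreducible) with the fact that all left-hand sides here have length $2$; for a general monadic system one would instead inspect a suffix of the stack of length bounded by the longest left-hand side, which still gives $O(1)$ work per symbol and is essentially how the cited theorem of Book is proved.
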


(For more details on  the big-O notation used in Lemma~\ref{lemma:irreducible_linear}, and more
generally for basic notions on complexity theory, see
\cite[Section~7]{Sip96}.)

\begin{lemma}\label{lemma:cyclically_linear}
Let $a$ be an irreducible word of $P_n$. Then the words $\widetilde{a}$ and $\tla$, can be computed in time
$O(|a|)$.
\end{lemma}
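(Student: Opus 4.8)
The plan is to give an explicit linear-time procedure that extracts $\widetilde{a}$ and $\rho(a)$ directly from the normal form $a = yx^{-1}$, using only prefix comparisons and concatenations of words in $A_n^*$, each of which costs time proportional to the lengths involved. First I would observe that since $a$ is already given in irreducible form, its unique factorization as $a = yx^{-1}$ with $y,x \in A_n^*$ is obtained in time $O(|a|)$ simply by scanning $a$ from the left until the first inverse letter appears (everything before it is $y$, everything after is the reverse-inverted $x$); more precisely one reads off the maximal prefix lying in $A_n^*$ and then the suffix lying in $(A_n^{-1})^*$. Since $a$ is irreducible there is no factor $p_i^{-1}p_j$, so this two-block shape is forced, and the split takes one pass.

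Next, to compute $\widetilde{a}$: by \eqref{charc_reduced} we must strip off the longest common "rotation prefix", i.e. write $a = r\,\widetilde{a}\,r^{-1}$. Comparing $y$ and $x$ letter by letter from the left, let $r$ be their longest common prefix; this comparison costs $O(\min(|y|,|x|)) = O(|a|)$. Then $y = ry'$ and $x = rx'$ where $y'$ and $x'$ have no common first letter (or one of them is empty), and $\widetilde{a} = y'x'^{-1}$, which is cyclically reduced by definition and is produced in $O(|a|)$ time. For $\rho(a)$: by fact (a) it is the irreducible form of $x^{-1}y$ in $P_n$. Rather than invoking Lemma~\ref{lemma:irreducible_linear} on the word $x^{-1}y$ (which would already suffice, since $|x^{-1}y| = |a|$, giving $O(|a|)$), I would prefer the more transparent route via fact (d): compare $x$ and $y$ as prefixes of one another (again $O(|a|)$). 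If $x$ is a prefix of $y$, say $y = xw$, then $x^{-1}y$ reduces to $w \in A_n^*$ and $\rho(a) = w = \widetilde{a}$; if $y$ is a prefix of $x$, say $x = yw$, then $x^{-1}y = (x^{-1}y)$ reduces to $w^{-1} \in (A_n^{-1})^*$ and $\rho(a) = w^{-1}$; otherwise $x$ and $y$ disagree at some position within their common length, hence $x^{-1}y$ contains a factor $p_i^{-1}p_j$ with $i \ne j$ and reduces to $0$, so $\rho(a) = 0$. Each branch is resolved and its output produced in a single pass.

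Putting these together: we spend $O(|a|)$ to split $a = yx^{-1}$, then $O(|a|)$ for each of the longest-common-prefix / prefix tests and the resulting concatenations, so the total is $O(|a|)$, proving the claim. I do not expect any genuine obstacle here; the only point requiring a little care is the bookkeeping that the words $y$, $x$, $r$, $w$ all have length bounded by $|a|$, so that every string operation invoked is itself linear in $|a|$ — this follows because $|y| + |x| = |a|$ and each auxiliary word is a prefix or suffix of $y$ or $x$. Strictly speaking one could also dispatch the whole lemma in one line by noting that $\widetilde{a}$ is obtained from $a$ by a length-reducing rewriting system (namely $R_1$ together with the cyclic-reduction rule) and that $\rho(a)$ is the $R$-normal form of $x^{-1}y$, then quoting Lemma~\ref{lemma:irreducible_linear} and \cite[Theorem~4.1]{Bo1982}; but the direct argument above is self-contained and makes the constants visible.
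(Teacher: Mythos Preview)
Your proposal is correct. The computation you describe---split $a=yx^{-1}$ by one scan, take the longest common prefix $r$ of $y$ and $x$ to obtain $\widetilde{a}=y'x'^{-1}$, and test prefix-comparability of $x,y$ to decide $\rho(a)$---does exactly what is needed, and each step is $O(|a|)$ as you argue.

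The paper's route is the one you mention at the end as an alternative: it forms the word $x^{-1}y$, reduces it in the system $R_1=\{p_i^{-1}p_i\to 1\}$ via Book's linear-time reduction theorem to obtain $u^{-1}v$, outputs $\widetilde{a}=vu^{-1}$, and then declares $\rho(a)=0$ precisely when both $u$ and $v$ are nonempty. Your longest-common-prefix computation is exactly what the $R_1$-reduction of $x^{-1}y$ amounts to on this particular input shape (cancellations can only happen at the boundary between $x^{-1}$ and $y$, one letter at a time, and stop the moment the next pair disagrees), so the two arguments are the same computation described in different vocabulary. What your version buys is self-containment: you avoid the external citation and make the elementary character of the algorithm explicit. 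What the paper's version buys is brevity and a uniform pointer to a general tool already used in the preceding lemma.
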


\begin{proof}
The result is obvious if $a=0$.
Let $a=yx^{-1}$. To compute $\widetilde{a}$ proceed as follows:
\begin{enumerate}
 \item compute the  word $x^{-1}y$;
 \item reduce $x^{-1}y$ to an irreducible word $u^{-1}v$ in $(\Sigma,R_1)$ (see (b) above);
 \item output the word $\widetilde{a}=vu^{-1}$.
\end{enumerate}
    To compute $\tla$ proceed in the same way to obtain the word $vu^{-1}$, and next proceed as follows:
\begin{enumerate}
 \item[(4)] if $v$ and $u$ are non-empty, then output $\tla=0$,
otherwise output $\tla=\widetilde{a}$.
\end{enumerate}

We show that each stage of this algorithm uses $O(|a|)$ steps, and so
the result holds. For the first stage, it is sufficient to scan through the word
$yx^{-1}$ (from left to right), detect the first symbol in $(A_n^{-1})^*$, and
output the symbols of $x^{-1}$ followed by the symbols of~$y$. This requires
$O(|a|)$ steps. The third stage is similar. For the second stage, since $R_1$ is
length reducing, we conclude
by \cite[Theorem~4.1]{Bo1982} that $\widetilde{a}$ can be computed in
$O(|a|)$ steps.  Checking if a word is empty can be done in constant time, and
so $\tla$ can be computed in linear time as well.
\end{proof}

\begin{theorem}
Let  $(\Sigma_0;R)$ be the monoid-with-zero presentation of $P_n$, and let $i\in\{p,c\}$.
Then, given two words $x,y\in \Sigma_0^*$, it can be tested in time $O(m)$, where
$m=\max\{|x|,|y|\}$, whether or not $x\sim_iy$ holds in $P_n$.
\end{theorem}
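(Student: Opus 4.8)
The strategy is to reduce the conjugacy tests to a sequence of linear-time preprocessing steps followed by a constant (or at worst linear) amount of comparison work, using the characterizations of $\cp$ and $\con$ already established (Theorems~\ref{ccp} and~\ref{p56}) together with the linear-time computability results (Lemmas~\ref{lemma:irreducible_linear} and~\ref{lemma:cyclically_linear}). First I would apply Lemma~\ref{lemma:irreducible_linear} to both input words $x$ and $y$ to obtain their irreducible forms $\overline{x}$ and $\overline{y}$ in time $O(|x|)$ and $O(|y|)$, hence $O(m)$; note that $|\overline{x}|\le|x|$ and $|\overline{y}|\le|y|$ since $R$ is length reducing. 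Then I would apply Lemma~\ref{lemma:cyclically_linear} to $\overline{x}$ and $\overline{y}$ to compute the cyclically reduced words $\widetilde{x}$, $\widetilde{y}$ and the words $\rho(x)$, $\rho(y)$, again in time $O(|\overline{x}|)$, $O(|\overline{y}|)$, hence $O(m)$. At this point all the data appearing in the statements of Theorems~\ref{ccp} and~\ref{p56} has been computed.

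Next I would split into the two cases $i=c$ and $i=p$. For $i=c$: by Theorem~\ref{p56}, $x\con y$ iff ($\overline{x}=\overline{y}=0$), or ($\widetilde{x}=\widetilde{y}$), or ($\widetilde{x},\widetilde{y}\in(A_n^{-1})^*$ and $\widetilde{x}\cp\widetilde{y}$ in $(A_n^{-1})^*$). Checking whether $\overline{x}$ or $\overline{y}$ equals $0$ is constant time; checking the word equality $\widetilde{x}=\widetilde{y}$ is $O(m)$ (compare symbol by symbol, using Lemma~\ref{lbas}(3) so that equality of irreducible words is literal equality in $A_n^*\cup(A_n^{-1})^*$); checking whether $\widetilde{x}$ and $\widetilde{y}$ both lie in $(A_n^{-1})^*$ is a single left-to-right scan, $O(m)$; and the final $\cp$-test in a free monoid is decidable in linear time by \cite[Theorem~2.5]{AM1980} (equivalently \cite[Theorem~2.5]{AM1980}'s cyclic-rotation/Lyndon-word argument), applied to the words obtained from $\widetilde{x},\widetilde{y}$ by replacing each $p_i^{-1}$ with $p_i$, which is an $O(m)$ relabeling. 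For $i=p$: by Theorem~\ref{ccp}, $x\cp y$ iff one of (a)--(d) holds. Condition (a) is: $\overline{x}=0$ and $\rho(y)=0$, or $\rho(x)=0$ and $\overline{y}=0$ — all constant-time checks. Condition (b) requires $\rho(x)=\rho(y)=0$ and $\widetilde{x}=\widetilde{y}$, checked in $O(m)$ as above. Conditions (c) and (d) require checking that $\widetilde{x},\widetilde{y}$ both lie in $A_n^*$ (resp. both in $(A_n^{-1})^*$) and then running the free-monoid $\cp$-test, again $O(m)$. Summing finitely many $O(m)$ steps gives the total bound $O(m)$.

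The main point requiring care — and the only real obstacle — is justifying that the primary-conjugacy test in the free monoid $A_n^*$ runs in linear time, since conditions (c) and (d) of Theorem~\ref{ccp} and condition (c) of Theorem~\ref{p56} all invoke it. I would cite \cite[Theorem~2.5]{AM1980} for this: two words are primarily conjugate in a free monoid iff one is a cyclic rotation of the other, which can be decided in linear time by a standard string-matching argument (e.g., testing whether $\widetilde{y}$ is a factor of $\widetilde{x}\widetilde{x}$ when $|\widetilde{x}|=|\widetilde{y}|$). One should also remark that the relabeling $A_n^{-1}\to A_n$ used to move condition (d) into a free monoid over $A_n$ is harmless: it is a length-preserving bijection of alphabets inducing an isomorphism of free monoids, so it preserves $\cp$ and costs only $O(m)$. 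With these observations in place, the theorem follows by assembling the cases; I would present it by describing the algorithm explicitly as a short enumerated procedure and then tallying the cost of each step against $m=\max\{|x|,|y|\}$.
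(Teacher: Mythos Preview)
Your proposal is correct and follows essentially the same approach as the paper: reduce both inputs to irreducible form via Lemma~\ref{lemma:irreducible_linear}, compute the cyclically reduced words and $\rho$-values via Lemma~\ref{lemma:cyclically_linear}, and then check the conditions of Theorems~\ref{ccp} and~\ref{p56}, invoking the linear-time free-monoid $\cp$-test from \cite{AM1980}. Your write-up is simply more explicit about the case analysis and the relabeling for $(A_n^{-1})^*$ than the paper's compressed version, but the argument is identical.
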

\begin{proof}
Let $x,y\in \Sigma_0^*$. By Lemma~\ref{lemma:irreducible_linear}, the
irreducible words $\overline{x}=a$ and $\overline{y}=b$ can be computed in time $O(m)$, where
$m=\max\{|x|,|y|\}$. Note that $|a|\leq |x|$ and $|b|\leq |y|$.
By Lemma~\ref{lemma:cyclically_linear}, each of the words $\wti{a}$, $\wti{b}$, $\rho(a)$, and $\rho(b)$
can be computed in time $O(m)$.

According to Theorems~\ref{ccp} and \ref{p56}, in order to check whether or
not $x\sim_iy$ holds it suffices to compute $a$, $b$, $\widetilde{a}$,
$\widetilde{b}$, $\tla$, and $\tlb$, and check whether or not they are equal
(as words) or $p$-conjugate (in the free monoid). Since the $p$-conjugacy
problem in the free monoids is decidable in linear time, we deduce the desired
result.
\end{proof}

\section{Decidability in finitely presented monoids}\label{sdec}
In this section, we discuss the decidability of $i$-conjugacy problems in some
classes of finitely presented monoids.

\paragraph{Separation of conjugacies.}
Let $M$ be a monoid without zero.
Consider the monoid $M^0$ obtained from $M$ by adjoining a zero element. It is
immediate that $\sim_{\!o}$ is the
universal relation in $M^0$, while $\sim_c$ is not universal in $M^0$. Now,
$M^0$ has no zero divisors, and hence any two given elements $a$ and $b$ of $M$
are $c$-conjugate in $M^0$ if and only if they are $o$-conjugate in $M$.
Therefore, $\sim_c^{M^0}\,\, =\,\, \sim_{\!o}^M\cup\{(0,0)\}$ in $M^0$.
Similarly, for $p$-conjugacy we have $\sim_p^{M^0}\,\,=\,\, \sim_p^M\cup
\{(0,0)\} $.
Thus, if we identify a monoid $M$ for which $\sim_{\!o}\,\, \ne\,\, \sim_p$ in
$M$,
we then immediately obtain an example of a monoid $M^0$ where
$\sim_{\!c}\,\,\ne\,\, \sim_p$, $\sim_{\!o}\,\,\ne\,\, \sim_p$, and
$\sim_{\!o}\,\,\ne\,\, \sim_c$.
To find such a monoid (within a certain class of rewriting systems), consider
the following example from \cite[Example~2.2]{Ot84}.
\begin{example}
Let $M$ be the monoid defined by the monadic and confluent presentation
$(\Sigma; R)$ with $\Sigma=\{a,b,c\}$ and $R=\{ (ab,b),(cb,b)\}$. As explained
in \cite[Example~2.2]{Ot84}, we have  $bac \sim_p ba$, but clearly $bac$ and
$ba $ are not $o$-conjugate. Therefore, $\sim_{\!o} \,\, \ne\,\, \sim_p$ in $M$,
and hence the relations
$\sim_{\!o} $, $\sim_p$ and $\sim_c$ are pairwise distinct in $M^0$.
\end{example}
We deduce that for monoids defined by monadic presentations, the relations
$\sim_c$, $\sim_p$ and $\sim_{\!o}$ may be different, even when such systems
are also finite and confluent.

\paragraph{Finite complete presentations.}
Narendran and Otto \cite[Lemma~3.6]{NaOt86} constructed a finite
complete presentation $(\Sigma;R)$ such that the $o$-conjugacy problem is
undecidable for the monoid $M=M(\Sigma;R)$.
Using the above observation, we obtain the following result.

\begin{prop}\label{pzex}
There is a monoid defined by a finite complete presentation for which the
$c$-conjugacy problem is undecidable.
\end{prop}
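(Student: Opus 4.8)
The plan is to combine the construction of Narendran and Otto with the zero-adjunction observation that precedes the statement. First I would invoke \cite[Lemma~3.6]{NaOt86}: there exists a finite complete presentation $(\Sigma;R)$ such that the monoid $M=M(\Sigma;R)$ has undecidable $o$-conjugacy problem. Since a complete presentation is in particular noetherian and confluent, the word problem for $M$ is decidable (reduce both words to their unique normal forms and compare), which guarantees that the only obstruction to deciding $\sim_c$ in $M^0$ will be the $o$-conjugacy problem in $M$ itself, not some artifact of the presentation.

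Next I would pass to $M^0$, the monoid obtained from $M$ by adjoining a zero. By the discussion in the ``Separation of conjugacies'' paragraph, $M^0$ is presented by the monoid-with-zero presentation $(\Sigma;R)$, and by \cite[Proposition~3.1]{AM2011} (quoted in the Background section) the associated rewriting system $R\cup R_0$ on $\Sigma_0=\Sigma\cup\{0\}$ is again finite and complete. So $M^0$ is a monoid defined by a finite complete presentation. Moreover, as established just before the statement, $\sim_c^{M^0}\,=\,\sim_o^M\cup\{(0,0)\}$: indeed $M$ has no zero, so $\sim_c^M=\sim_o^M$, and since $M^0$ has no zero divisors, two nonzero elements are $c$-conjugate in $M^0$ exactly when they are $o$-conjugate in $M$, while the class of $0$ is $\{0\}$.

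Finally I would argue the reduction. Suppose, for contradiction, that the $c$-conjugacy problem for $M^0$ were decidable. Given any two words $u,v\in\Sigma^*$ representing elements of $M$, one may regard them as words over $\Sigma_0^*$ representing elements of $M^0$; since neither reduces to $0$ (the rules in $R_0$ only fire on words containing the symbol $0$, and $u,v$ do not), they are nonzero in $M^0$. Then $u\sim_o v$ in $M$ if and only if $u\sim_c v$ in $M^0$, so a decision procedure for $\sim_c$ in $M^0$ would yield one for $\sim_o$ in $M$, contradicting \cite[Lemma~3.6]{NaOt86}. Hence the $c$-conjugacy problem for $M^0$ is undecidable, and $M^0$ is the required monoid.

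The only real point requiring care — and the step I would flag as the main (minor) obstacle — is making fully precise the claim that adjoining $0$ does not alter the status of the problem: one must check both that the new presentation is still finite and complete (handled by \cite[Proposition~3.1]{AM2011}) and that the embedding of $M$ into $M^0$ respects conjugacy in the exact form $\sim_c^{M^0}=\sim_o^M\cup\{(0,0)\}$, which rests on $M$ having no zero divisors (true for any monoid before a zero is formally adjoined) and on the conjugacy class of $0$ being trivial under $\sim_c$. Both are already recorded in the excerpt, so the proof is short.
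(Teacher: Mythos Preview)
Your proposal is correct and follows essentially the same route as the paper's proof: take the Narendran--Otto monoid $M$ with undecidable $o$-conjugacy, adjoin a zero, invoke \cite[Proposition~3.1]{AM2011} to keep the presentation finite and complete, and use $\sim_c^{M^0}=\sim_o^M\cup\{(0,0)\}$ to transfer undecidability. One small tightening: you assert ``$M$ has no zero'' without argument, but this is automatic---if $M$ had a zero then $\sim_o$ would be universal on $M$ and hence trivially decidable, contradicting the choice of $M$.
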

\begin{proof}
Consider the monoid $M^0$ obtained from the monoid $M$ defined by Narendran and
Otto in \cite[page~35]{NaOt86} which has undecidable $o$-conjugacy problem.
Since $M$ is defined by a finite complete presentation, the monoid $M^0$ is
also defined by a finite complete presentation by \cite[Proposition
3.1]{AM2011}. It can be seen that $M$ does not have a zero.
Thus $\sim_c^{M^0}\,\, =\,\, \sim_{\!o}^M\cup\{(0,0)\}$, and hence $M^0$ has
undecidable $c$-conjugacy problem.
\end{proof}

\paragraph{Special presentations.}
It is easy to see that a monoid defined by  a special presentation has a
zero if and only if it is trivial.
Hence, within this class we have $\sim_c\,\,=\,\,\sim_{\!o}$. Zhang
\cite[Theorem~3.2]{Zh91} proved that in every monoid $M$ defined by a special
presentation, the relations $\sim_p$ and $\sim_{\!o}$ also coincide.
Otto \cite[Theorem~3.8]{Ot84} proved that if $M$ is a monoid defined by a
finite, special, and confluent
presentation, then the $o$-conjugacy problem for $M$ is decidable (and so
the $p$-conjugacy and $c$-conjugacy
problems are also decidable for $M$).

\paragraph{One-relator monoids.}
A monoid $M$ is called a \emph{one-relator} monoid if it admits a finite
presentation with one defining relation,
which we will write as $(\Sigma;u=v)$ instead of $(\Sigma,\{(u,v)\})$.
Many decision problems have been studied in the class of one-relator monoids.
For example, it is decidable whether a one-relator monoid has a zero
\cite[Proposition~14]{CaMa09}.
Moreover, a one-relator monoid $M$ containing a zero
admits a presentation $(\{a\}; a^{k+1}=a^k)$, where $k$ is a positive integer
\cite[the proof of Proposition~14]{CaMa09}.
It is easy to check that in this monoid $\sim_p\,\,=\,\,\sim_c\,\,=\{(x,x):x\in
M\}$ and $\sim_{\!o}\,\,=M\times M$.

By the foregoing argument, if $M$ is a one-relator monoid with a zero, then the
$c$-conjugacy and $o$-conjugacy problems
for $M$ are decidable. If $M$ has no zero, then $\sim_c\,\,=\,\,\sim_{\!o}$.
Therefore, the $c$-conjugacy problem for such an $M$ is decidable if and only if the
$o$-conjugacy problem for $M$ is decidable.

Some specific results concerning the decidability of the $o$-conjugacy problem
for this class can be found in \cite{Zh91,Zh92}.

\section{Independence in finitely presented monoids}\label{sind}
In this section, we prove that
for finitely presented monoids, the word problem and the $c$-conjugacy problem
are independent, and that the $p$-conjugacy problem and the $c$-conjugacy problem are
independent.

\begin{defi}
\label{dind}
Decision problems $P_1$ and $P_2$ are \emph{independent}
if there exist finitely presented monoids $M_1$ and $M_2$ such that for $M_1$,
$P_1$ is decidable and $P_2$
is undecidable; and for $M_2$, $P_2$ is decidable and $P_1$ is undecidable.
\end{defi}

\begin{theorem}\label{tin1}
For finitely presented monoids, the word problem and the $c$-conjugacy problem
are independent.
\end{theorem}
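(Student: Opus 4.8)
The plan is to establish independence by exhibiting two finitely presented monoids: one where the word problem is decidable but the $c$-conjugacy problem is undecidable, and one where the $c$-conjugacy problem is decidable but the word problem is undecidable. The first direction is essentially already in hand: Proposition~\ref{pzex} gives a monoid $M^0$ with a finite complete presentation whose $c$-conjugacy problem is undecidable, and since the presentation is complete (hence confluent and noetherian), the word problem for $M^0$ is decidable by reduction to irreducible normal forms. So I would simply invoke $M^0$ for the ``word problem decidable, $c$-conjugacy undecidable'' half.

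For the converse half I need a finitely presented monoid $N$ with undecidable word problem but decidable $c$-conjugacy problem. The natural strategy is to make $c$-conjugacy trivially decidable by collapsing it---for instance, by arranging that $\con$ is either the universal relation or the identity relation (or some other relation independent of the word problem). The cleanest route is to take a finitely presented monoid $G$ (or more precisely a monoid built from one) with undecidable word problem---such monoids exist by \cite{Ma47,No58,Po47}---and adjoin a zero, forming $N=G^0$. By the discussion in Section~\ref{sdec}, $\con$ on $G^0$ restricts to $\co$ on $G$ together with $(0,0)$; but more to the point, if $G$ is chosen so that $\co$ on $G$ is decidable (or universal), then $\con$ on $G^0$ is decidable. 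Adjoining a zero does not affect the word problem: $[u]=[v]$ in $G^0$ for $u,v$ not involving $0$ iff $[u]=[v]$ in $G$, so the word problem for $N$ remains undecidable. Concretely I would look for a monoid with undecidable word problem in which $\co$ is already the universal relation, or pass to a group $G$ with undecidable word problem where (being a group) $\con=\co=$ group conjugacy, adjoin a zero so that $\con^{N}=\text{(group conjugacy)}\cup\{(0,0)\}$, and then note that for this to help I actually want the conjugacy problem for $G$ to be decidable while the word problem is not---but that contradicts the fact recalled in the introduction that for groups the word problem reduces to the conjugacy problem. So the group route fails, and I must use a genuine monoid: take a monoid $M$ with undecidable word problem such that $\co$ (equivalently $\con$ after adjoining a zero, or $\con$ directly if $M$ already has a zero) is universal or otherwise trivially decidable.

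The main obstacle is therefore constructing or locating such a monoid $N$: one needs undecidability of equality of words but triviality (universality or identity) of $\con$. One promising construction: start from a monoid $S$ with undecidable word problem and form a new monoid in which every element is made $c$-conjugate to every other by introducing suitable conjugators inside each $\pp(a)$, while keeping the underlying word problem intact; or, build $N$ as a quotient/extension designed so that $\con$ has a single nonzero class. I expect the details of verifying that the word problem is genuinely inherited (undecidable) and that $\con$ is genuinely decidable in the new monoid to be the technical heart of the argument, and I would model the construction on the monoid-with-zero machinery of Section~\ref{sdec} and on Osipova's \cite{Os73} style of argument, since she already separated the word problem from $p$- and $o$-conjugacy and her monoids or variants thereof are the obvious candidates to push through for $\con$ as well.
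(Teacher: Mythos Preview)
Your first direction is fine and, if anything, cleaner than the paper's: invoking Proposition~\ref{pzex} gives a monoid with a finite complete presentation (hence decidable word problem) and undecidable $c$-conjugacy, and that suffices. The paper instead cites the existence of finitely presented \emph{groups} with decidable word problem but undecidable conjugacy problem \cite{Bo68,Co69}, and uses that in a group $\con$ coincides with group conjugacy. Either route works.

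The second direction, however, is not a proof but a survey of abandoned strategies. You correctly observe that the group-with-zero approach cannot work (in a group, decidable conjugacy forces decidable word problem), but you then stop at ``I would model the construction on \ldots'' without actually constructing anything. That is the gap. Note also that two of the target shapes you mention are dead ends: if $\con$ is the identity relation, then deciding $\con$ \emph{is} the word problem, so that cannot yield the separation; and if the monoid has a zero, $\con$ cannot be universal because $\{0\}$ is always its own $c$-class. So you genuinely need ``some other relation independent of the word problem,'' and you have not produced one.

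The paper resolves this with an explicit construction. Start from a finitely presented group $G=M(\Sigma;R)$ with undecidable word problem, add two fresh letters $a,b$, and impose $xa=ax$ for $x\in\Sigma$, $bx=b$ for $x\in\Sigma\cup\{a\}$, $xb=b$ for $x\in\Sigma$, and $aa=a$. One checks that $G$ embeds (so the word problem stays undecidable), that the monoid has no zero, and---crucially---that every relation preserves the number of occurrences of $b$, while suitable conjugators involving $a$ and $b$ show that any two words with the same $b$-count are $c$-conjugate. Hence $[u]\con[v]$ if and only if $|u|_b=|v|_b$, a trivially decidable invariant. This explicit counting invariant is exactly the missing idea in your proposal; pointing at Osipova's paper is not a substitute, since one would still have to verify that her monoids have no zero (so that $\con=\co$) before her $o$-conjugacy result transfers to $\con$.
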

\begin{proof}
First, there are finitely presented groups with decidable word problem
but undecidable conjugacy problem \cite{Bo68,Co69}. Let $G$ be a finitely
presented group. A finite group
presentation of $G$ can be effectively converted to a finite (special) monoid
presentation $(\Sigma;R)$
such that $G\cong M(\Sigma;R)$. It follows that there is a monoid $M$
defined by a finite presentation for which the word problem is decidable
and the $c$-conjugacy problem is undecidable.

We will construct a finitely presented monoid for which the converse is true.
Let $G=M(\Sigma;R)$ be a finitely presented group with undecidable word problem
(see \cite{No58}), where $(\Sigma;R)$ is a monoid
presentation. Let $a$ and $b$ be symbols not in $\Sigma$, and let $M=M(A;T) $ be the
monoid defined by the presentation $(A;T)$, where
\begin{align*}
A &= \Sigma\cup\{a,b\}\,,\\
T &= R\cup\{(xa,ax):x\in\Sigma\}\cup\{(bx,b):x\in\Sigma\cup\{a\}\}\cup
\{(xb,b):x\in\Sigma\}\cup\{(aa,a)\}\,.
\end{align*}
Notice that $G$ is a subgroup of $M$.
The word problem for $M$ is undecidable (since otherwise it would be decidable
for $G$).
It is easy to see that $M$ has no zero and that each congruence class
$[u]=[u]_M$ has a representative of the form
$b^p$, $aw$, $ab^p$, or $w$, where $p$ is a positive integer and $w\in\Sigma^*$.

Observe that whenever a rewriting rule from $T$ is applied to a word in $A^*$, the number of occurrences of
$b$ does not change. Thus, for all $u_1,u_2\in A^*$,
if $[u_1]=[u_2]$, then $|u_1|_b=|u_2|_b$. Let $[u],[v]\in M$.
Suppose $[u]\sim_c [v]$. Then $[u][t]=[t][v]$ for some $t\in A^*$. Thus
$[ut]=[tv]$, and so $|u|_b=|v|_b$
by the foregoing observation.

Conversely, suppose $|u|_b=|v|_b$. If $|u|_b=|v|_b=0$, then $[u]\sim_c[v]$
since
$[u][ab]=[ab]=[ab][v]$ and
$[v][ab]=[ab]=[ab][u]$. Suppose $|u|_b=|v|_b=p>0$. If $[u]=[v]$, then
$[u]\sim_c[v]$.
Suppose $[u]\ne[v]$. Then $[u]=[b^p]$ and $[v]=[ab^p]$, or vice versa.
We may assume that $[u]=[b^p]$ and $[v]=[ab^p]$. Then $[u]\sim_c[v]$ since
$[u][b]=[b^{p+1}]=[b][v]$ and
$[v][a]=[ab^p]=[a][u]$.

We have proved that for all $u,v\in A^*$, $[u]\sim_c[v]$ if and only if
$|u|_b=|v|_b$. Hence the $c$-conjugacy problem for $M$
is decidable.
\end{proof}

\begin{theorem}\label{tin2}
For finitely presented monoids, the $p$-conjugacy problem and the $c$-conjugacy
problem are independent.
\end{theorem}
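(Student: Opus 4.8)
The plan is to mirror the structure of the proof of Theorem~\ref{tin1}, producing two finitely presented monoids that separate the $p$-conjugacy and $c$-conjugacy problems in opposite directions. For one direction I would reuse the monoid $M$ built in the proof of Theorem~\ref{tin1}: there the $c$-conjugacy problem is decidable (membership in a $c$-conjugacy class is governed purely by the count $|u|_b$), so it suffices to show that the $p$-conjugacy problem for that same $M$ is undecidable. Since $G$ is a subgroup of $M$ and $\cp$ restricted to a group is ordinary group conjugacy, if I can argue that for $u,v\in\Sigma^*$ we have $[u]\cp[v]$ in $M$ if and only if $u$ and $v$ are conjugate in $G$, then the undecidable conjugacy problem of $G$ (choosing $G$ to have decidable word problem but undecidable conjugacy problem, as in \cite{Bo68,Co69}) transfers to the $p$-conjugacy problem for $M$. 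The content here is a normal-form analysis: using the explicit rewriting rules $T$, one shows that a product $[w_1][w_2]=[u]$ with $[w_2][w_1]=[v]$ forces, after tracking how $a$ and $b$ behave, that the $\Sigma$-parts are a cyclic decomposition of a word representing (a conjugate of) $u$ in $G$.

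For the other direction I need a finitely presented monoid $N$ whose $p$-conjugacy problem is decidable but whose $c$-conjugacy problem is undecidable. The natural source is Proposition~\ref{pzex} together with the ``separation of conjugacies'' discussion in Section~\ref{sdec}: take a monoid $M'$ without zero for which $\sim_o$ is undecidable (Narendran--Otto, \cite[Lemma~3.6]{NaOt86}), and form $N=(M')^0$. Then $\sim_c^{N}=\sim_o^{M'}\cup\{(0,0)\}$ is undecidable, while $\sim_p^{N}=\sim_p^{M'}\cup\{(0,0)\}$. So the task reduces to exhibiting such an $M'$ for which, in addition, the $p$-conjugacy problem is decidable. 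One would either verify that the Narendran--Otto construction already has decidable $p$-conjugacy (plausible since their presentation is finite and complete, and $p$-conjugacy is often tractable via normal forms even when $\sim_o$ is not), or, more robustly, modify the construction so that $\sim_p$ becomes trivial or otherwise computable while preserving undecidability of $\sim_o$ — for instance by adjoining generators and relations that collapse $\cp$ without affecting the gadget that encodes the undecidable problem into $\sim_o$.

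The main obstacle I expect is the second direction: controlling $p$-conjugacy in the Narendran--Otto monoid (or a variant of it) precisely enough to prove decidability, while keeping the undecidability of $o$-conjugacy intact. Unlike $\sim_o$, the relation $\cp$ is not transitive, so ``deciding $\cp$'' really means deciding a single step $a=uv,\ b=vu$, which is a statement about factorizations of normal forms; whether this is decidable depends delicately on the rewriting system. If the original construction does not cooperate, I would fall back on building $M'$ as a more transparent monoid — e.g.\ one where every element is $\cp$-related only to itself and its cyclic conjugates in an obvious combinatorial sense — into which the undecidable word/$o$-conjugacy instance is embedded via a monadic or length-reducing presentation, so that Lemma-type normal-form arguments as in Section~\ref{spol} apply.

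\begin{proof}
For the first direction, let $G$ be a finitely presented group with decidable word problem but undecidable conjugacy problem \cite{Bo68,Co69}, and let $M=M(A;T)$ be the monoid constructed in the proof of Theorem~\ref{tin1} from this $G$. As shown there, the $c$-conjugacy problem for $M$ is decidable. We claim the $p$-conjugacy problem for $M$ is undecidable. Recall $G$ embeds in $M$ as the submonoid of classes $[w]$ with $w\in\Sigma^*$. Using the normal forms $b^p$, $aw$, $ab^p$, $w$ (with $w\in\Sigma^*$) and the fact that no rule of $T$ changes $|\cdot|_b$, one analyzes when $[u]\cp[v]$ for $u,v\in\Sigma^*$: if $[u]=[w_1][w_2]$ and $[v]=[w_2][w_1]$, then $|w_1|_b=|w_2|_b=0$ (else $|u|_b>0$), and projecting the relations in $T$ that involve $a$ and $b$ to the relations in $R$ shows that $[w_1],[w_2]$ may be taken in $\Sigma^*$ and that $u=w_1w_2$, $v=w_2w_1$ in $G$. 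Hence $[u]\cp[v]$ in $M$ iff $u$ and $v$ are conjugate in $G$. Since conjugacy in $G$ is undecidable, so is $p$-conjugacy in $M$.

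For the second direction, let $M'$ be the monoid of Narendran and Otto \cite[Lemma~3.6]{NaOt86}, defined by a finite complete presentation and having undecidable $o$-conjugacy problem; as noted in the proof of Proposition~\ref{pzex}, $M'$ has no zero. Put $N=(M')^0$, which is again finitely presented (indeed by a finite complete presentation, by \cite[Proposition~3.1]{AM2011}). By the separation-of-conjugacies discussion, $\sim_c^{N}=\sim_o^{M'}\cup\{(0,0)\}$, so the $c$-conjugacy problem for $N$ is undecidable, while $\sim_p^{N}=\sim_p^{M'}\cup\{(0,0)\}$. It remains to check that $p$-conjugacy in $M'$ (equivalently in $N$, modulo the isolated class $\{0\}$) is decidable: since $M'$ is given by a finite complete presentation, one computes the unique irreducible representatives $\overline{u},\overline{v}$ of the two input words, and then $[u]\cp[v]$ holds iff there is a factorization $\overline{u}\xrightarrow{*}w_1w_2$ and $w_2w_1\xrightarrow{*}\overline{v}$; by length-reduction and confluence this search is bounded and hence effective. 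Therefore $N$ has decidable $p$-conjugacy problem and undecidable $c$-conjugacy problem.

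Combining the two constructions, $M$ witnesses decidability of $c$-conjugacy with undecidability of $p$-conjugacy, and $N$ witnesses the reverse. Hence, by Definition~\ref{dind}, the $p$-conjugacy problem and the $c$-conjugacy problem are independent for finitely presented monoids.
\end{proof}
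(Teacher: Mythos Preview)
Your first direction is essentially the paper's argument (with a harmless variation: you take $G$ to have decidable word problem and undecidable conjugacy problem, whereas the paper simply reuses the $G$ from Theorem~\ref{tin1} with undecidable word problem, which already forces undecidable conjugacy). The normal-form analysis you sketch is correct and matches the paper's: if $[u]=[s][t]$ and $[v]=[t][s]$ with $u,v\in\Sigma^*$, then neither $s$ nor $t$ can contain $b$ (the $b$-count is invariant), and then neither can contain $a$ (no rule of $T$ eliminates the last $a$ unless a $b$ is present); hence $s,t\in\Sigma^*$ and the equivalence with conjugacy in $G$ follows.

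The second direction, however, has a genuine gap. Your claim that $p$-conjugacy in the Narendran--Otto monoid $M'$ is decidable because ``the search is bounded'' is false. The statement $[u]\cp[v]$ asks for \emph{some} $w_1,w_2$ with $w_1w_2\,\rrh\,u$ and $w_2w_1\,\rrh\,v$; there is no a priori bound on $|w_1|,|w_2|$, since $w_1w_2$ only needs to \emph{reduce to} $\overline{u}$, not equal it. Completeness (even length-reduction plus confluence) gives you unique normal forms, not a bound on the factorisations in the preimage. In fact the paper itself records, citing \cite[Corollary~2.4]{NaOt86}, that the $p$-conjugacy problem is undecidable already for monoids defined by finite, length-reducing, confluent systems; so your proposed decidability argument cannot work in general, and you have no reason to believe it works for this particular $M'$.

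The paper handles this direction differently: it does not use the Narendran--Otto monoid at all, but Osipova's monoid $\Pi_3$ from \cite{Os73}. Osipova had already proved (her Theorem~3) that $\Pi_3$ has decidable $p$-conjugacy; the paper's contribution is to observe that her Lemma~4, combined with the symmetry of $\cp$, upgrades her undecidability of the one-sided relation $\sim_l$ to undecidability of $\sim_{\!o}$ in $\Pi_3$, and then to check from the shape of the defining relations that $\Pi_3$ has no zero, so that $\sim_{\!o}=\sim_c$ there. That is the missing idea in your attempt: you need a specific monoid where decidability of $\cp$ has actually been established, not a generic appeal to finite complete presentations.
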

\begin{proof}
Let $M=M(A;T)$ be the monoid from the proof of Theorem~\ref{tin1}. For
$w\in\Sigma^*$, we will write $[w]=[w]_M$
for the element of the monoid $M$, and $[w]_G$ for the element of the group $G$.

Let $u,v\in \Sigma^*$.
Suppose $[u]\sim_p[v]$, that is, $[u]=[s][t]$ and $[v]=[t][s]$ for some $s,t\in
A^*$. The words $s$ and $t$ cannot contain $b$
since in the presentation $(A;T)$ a word with $b$ cannot be reduced to a
word without $b$. But then $s$ and $t$ cannot contain $a$ either
since a word with $a$ cannot be reduced to a word without $a$ unless $b$ is
also
present. It follows
that $[u]_G=[s]_G[t]_G$ and $[v]_G=[t]_G[s]_G$, and so $[u]_G\sim_p[v]_G$.

We have proved that for all $u,v\in\Sigma^*$, if $[u]\sim_p[v]$ in $M$, then
$[u]_G\sim_p[v]_G$ in $G$. The converse is clearly true.
Since $\sim_p$ in $G$ is the group conjugacy and $G$ has undecidable word
problem
(and so undecidable conjugacy problem),
it follows that the $p$-conjugacy problem for $M$ is undecidable. We have
already established in the proof of Theorem~\ref{tin1}
that the $c$-conjugacy problem for $M$ is decidable.

We will now present a monoid that has decidable $p$-conjugacy problem and
undecidable $c$-conjugacy problem.
Osipova \cite{Os73} showed that there exists a finitely presented monoid $M$
that has decidable $p$-conjugacy problem and undecidable $l$-conjugacy problem,
where the $l$-conjugacy stands for the following relation $\sim_l$: given
$a,b\in M$,  $a\sim_l b$ if and only if there exists $g\in M$ such that
$ag=gb$.
Osipova's proof follows the following steps (we use the original notation):
(i) she considers a finitely presented monoid $\Pi_1=M({\mathcal U}_1; {\mathcal
B}_0)$
with undecidable $p$-conjugacy problem; (ii)
she extends the alphabet ${\mathcal U}_1$ to $\mathcal{U}_3={\mathcal
U}_1\cup\{c,d,e_1,\ldots, e_m\}$,
where $m=|{\mathcal U}_1|+2|{\mathcal B}_0|$, and builds  a new  finitely
presented
monoid
$\Pi_3=M({\mathcal U}_3; {\mathcal B}_3)$;
(iii) she shows \cite[Lemma 4]{Os73} that for all words $Q,R\in {\mathcal
U}_1^*$,
$Q\sim_p R$ in $\Pi_1$ if and only if there exists $X\in {\mathcal U}_3^*$
such that $XcQd=cRdX$ in $\Pi_3$; (iv) she concludes \cite[Theorem 2]{Os73}
that the $l$-conjugacy problem for $\Pi_3$ is undecidable;
(v) she shows \cite[Theorem 3]{Os73} that the $p$-conjugacy problem for $\Pi_3$
is decidable.

Now, notice that $\sim_p$ is symmetric, and hence, by \cite[Lemma 4]{Os73}, for
all words $Q,R\in {\mathcal U}_1^*$,
we have $Q\sim_p R$ in $\Pi_1$ if and only if there exist $X,Y\in {\mathcal
U}_3^*$
such that $XcQd=cRdX$ and $YcRd =cQdY$ in $\Pi_3$.
Equivalently,
$Q\sim_p R$ in $\Pi_1$ if and only if $cQd \sim_{\!o} cRd$ in $\Pi_3$.
Therefore, $\Pi_3$ has undecidable $o$-conjugacy problem.

The set ${\mathcal B}_3$ of $\Pi_3$ has rewriting rules of the form
$(e_icG_i,ce_i)$, $(e_ib_j,b_je_i)$, and
$(e_id,G'_ide_i)$, where $i=1,\ldots,m$ and $j=1,\ldots,n$, the $b_j$ are the
letters of the alphabet $\mathcal{U}_1$,
and the $G_i$ and $G'_i$ are fixed words in $\mathcal{U}_1^*$ \cite[pages~70
and~71]{Os73}.
From the form of these rules, we can easily deduce that any two words in
$\mathcal{U}_3^*$
that are equal in $\Pi_3$ have
the same number of occurrences of the letter~$c$. Therefore, $\Pi_3$ does not
have a zero
since the zero element, say $[z]$, would satisfy the identity $[z][c]=[z]$,
contradicting the above observation.
Hence $\sim_{\!o}\,=\,\sim_c$, and hence $\Pi_3$ has undecidable $c$-conjugacy
problem.
\end{proof}

We do not know if the $c$-conjugacy problem and the $o$-conjugacy problem are
independent for finitely presented monoids.
Consider a finitely presented monoid $M$ without zero that has undecidable
$c$-conjugacy problem. Let $M^0$
be the monoid $M$ with a zero $0$ adjoined. Then $M^0$ is finitely presented
and
the $c$-conjugacy problem
for $M^0$ is undecidable (since for all $a,b\in M$, $a\sim_c b$ in $M^0$ if and
only $a\sim_c b$ in $M$).
On the other hand, the $o$-conjugacy problem for $M^0$ is decidable since
$\sim_{\!o}\,=M^0\times M^0$.

Now, suppose $M$ is a finitely presented monoid that has decidable $c$-conjugacy problem. Then, if
we could prove that $M$ has a zero, then the algorithm that always says YES would decide if $[u]_M\sim_o[v]_M$ for all $[u]_M,[v]_M\in M$. Further, if we could prove that $M$ has no zero, then the
algorithm that works for $\sim_c$ would also work for $\sim_o$. However, suppose that the statement ``$M$ has a zero" can neither be proved nor disproved. Then it is conceivable that no algorithm for $o$-conjugacy problem in $M$ exists, that is, that $o$-conjugacy problem is undecidable for $M$.

\section{Open problems}\label{spro}

We conclude this paper with some natural questions related to conjugacy and
presentations.
As we have noticed in Section~\ref{sind},
the independence of the $c$-conjugacy and $o$-conjugacy problems
is related to the decidability of a monoid having a zero. Hence whether
the $o$-conjugacy and $c$-conjugacy problems are independent
hinges on the answer to the following question.

\begin{prob}
Does there exist a finitely presented monoid $M$ for which it is undecidable if
it has a zero, the $o$-conjugacy problem for $M$ is undecidable,
and the $c$-conjugacy problem for $M$ is decidable?
\end{prob}

The word problem is decidable for certain restricted classes of finitely
presented monoids, in particular those admitting a finite complete
presentation. It is then natural to consider this property as a useful tool in
proving decidability results.
In the class of monoids defined by finite, length-reducing, and confluent
rewriting systems,
the $o$-conjugacy problem is decidable \cite[Corollary~2.7]{NaOt85}.
It is also decidable if such monoids have a zero. However, the $p$-conjugacy
problem is undecidable in this class \cite[Corollary~2.4]{NaOt86}.

\begin{prob}
Is the $c$-conjugacy problem decidable for the class of monoids defined by
finite, length-reducing, and confluent rewriting systems?
\end{prob}

This problem could be approached by first considering the class of finite
monadic confluent rewriting systems, as it is the case of polyclyclic monoids.

\begin{prob}
Is the $c$-conjugacy [$p$-conjugacy] problem decidable for the class of monoids
defined by finite, monadic, and confluent rewriting systems?
\end{prob}

\section{Acknowledgments}
The first and second authors acknowledge that this work was developed within FCT 
projects CAUL (PEst-OE/MAT/UI0143/2014) and  CEMAT-CI\^{E}NCIAS 
(UID/Multi/04621/2013). 

The second author was also supported by Simons Foundation 
Collaboration Grant 359872.

The fourth author was supported by the 
Funda\c{c}\~{a}o para
a Ci\^{e}ncia e a Tecnologia (Portuguese Foundation for Science and Technology) 
through the project {\sc UID}/{\sc
  MAT}/00297/2013 (Centro de Matem\'{a}tica e Aplica\c{c}\~{o}es).
  
 Finnaly, the first, second and fourth authors were supported by FCT through 
project ``Hilbert's 24th problem'' (PTDC/MHC-FIL/2583/2014).

\end{document}